\renewcommand{\L}{\mathcal{L}}
\renewcommand{\S}{\mathcal{S}}
\newcommand{\N}{\mathcal{N}}
\newcommand{\K}{\mathbb{K}}
\newcommand{\GL}{\operatorname{GL}}
\renewcommand{\O}{\mathcal{O}}
\newcommand{\R}{\mathbb{R}}
\newcommand{\C}{\mathbb{C}}
\newcommand{\NN}{\mathbb{N}}
\newcommand{\g}{\mathfrak{g}}
\newcommand{\n}{\mathfrak{n}}
\newcommand{\h}{\mathfrak{h}}
\renewcommand{\sl}{\mathfrak{sl}}
\renewcommand{\mp}{\centerdot}
\newcommand{\ad}{\operatorname{ad}}
\renewcommand{\a}{\mathfrak{a}}
\newcommand{\Hom}{\operatorname{Hom}}
\newcommand{\tr}{\operatorname{tr}}
\renewcommand{\l}{\lambda}
\newcommand{\on}{\operatorname}
\newcommand {\Der} {\operatorname{Der}}
\renewcommand{\sc}{\circlearrowleft}
\newcommand{\vp}{\varphi}
\newcommand{\s}{\sigma}
\newcommand{\x}{\mathbf{x}}
\newtheorem{theorem}{Theorem}[section]
\newtheorem{lemma}[theorem]{Lemma}
\newtheorem{corollary}[theorem]{Corollary}
\newtheorem{proposition}[theorem]{Proposition}
\theoremstyle{definition}
\newtheorem{example}[theorem]{Example}
\theoremstyle{remark}
\newtheorem{remark}[theorem]{Remark}
\newtheorem{notation}[theorem]{Notation}
\newtheorem*{acknowledgements}{Acknowledgements}
\numberwithin{equation}{section}
\begin{document}

%opening
\title[]{Deformations and rigidity in varieties \\ of Lie algebras}
\author{Josefina Barrionuevo $\dag$ and Paulo Tirao $\dag$ $\ddag$
\smallskip\\
(\MakeLowercase{appendix by} Diego Sulca $\dag$)}
\address{$\dag$ CIEM-FaMAF, CONICET-Universidad Nacional de Córdoba \\
         Ciudad Universitaria, 5000 Córdoba, Argentina}
\address{$\ddag$ Guangdong Technion Israel Institute of Technology \\
         241 Daxue Road, Jinping District, Shantou, Guandong Province,
         China}
\date{July, 2022}
%\subjclass[2020]{Primary 17B30; Secondary 17B56; Tertiary 17B99}
\keywords{Lie algebras varieties, deformations, rigidity.}

\begin{abstract}
We present a novel construction of linear deformations for Lie algebras and use it to prove 
the non-rigidity of several classes of Lie algebras in different varieties.
In particular, we address the problem of $k$-rigidity for $k$-step nilpotent Lie algebras and $k$-solvable Lie algebras.

We show that Lie algebras with an abelian factor are not rigid,
even for the case of a 1-dimensional abelian factor.
This holds in the more restricted case of $k$-rigidity.

We also prove that the $k$-step free nilpotent Lie algebras 
are not $(k+1)$-rigid, but however they are $k$-rigid. 
\end{abstract}

\maketitle

\textbf {Keywords:} Lie algebras varieties, deformations, rigidity.

\textbf{MSC 2020:} Primary 17B30; Secondary 17B56; Tertiary 17B99.

%======================
\section{Introduction}
%======================

Let $\K$ be an algebraically closed field of characteristic zero. 
The variety $\L_n$ of $n$-dimensional Lie algebras 
over $\K$ is the affine algebraic variety of all antisymmetric bilinear maps 
$\mu:\K^n\times\K^n\to\K^n$ which satisfy the Jacobi identity, 
called Lie brackets over $\K$.
The orbits of the natural action of $\GL(\K^n)$ by change of basis are
the isomorphism classes of Lie brackets. A Lie bracket $\mu$ is called \emph{rigid} if its orbit is Zariski open, 
or equivalently $\mu$ is not rigid if and only if in any Zariski neighborhood of it there is a non-isomorphic Lie bracket. 

Determining all $n$-dimensional rigid Lie algebras is an enormous and highly relevant problem that is out of reach today. 
There are a finite number of them and the closure of the orbit of a rigid bracket is an irreducible component of $\L_n$. 

Different problems concerning the variety of Lie algebras have been addressed quite extensively for a long time.
Determining their irreducible components and their rigid points
as understanding degenerations and deformations have been some of the goals for many authors.
The reader may look at the following shortlist, which is far from exhaustive in any sense, and the references therein: 
\cite{BS, C, GA, GH1, GT1, GT2, S, TV, V}.
The general picture becomes even more interesting if one looks to 
different subvarieties of $\L_n$ and address the same problems there.

We consider with special interest the subvariety $\N_{n}$ of $n$-dimensional nilpotent Lie algebras and the descending chain of subvarieties 
$\N_{n,k}$, for $k= n-1,\dots,1$, of $n$-dimensional nilpotent Lie algebras with nilpotency index less than or equal to $k$.
Notice that $\N_{n,n-1}=\N_n$ and that the complement of $\N_{n,n-2}$ 
inside $\N$ is the open subvariety of $n$-dimensional filiform Lie algebras
introduced and studied by M.\ Vergne \cite{V}.
We also consider the subvariety $\S_n$ of $n$-dimensional solvable Lie algebras and the corresponding chain $\S_{n,k}$ of $n$-dimensional solvable Lie algebras with solvability index
less than or equal to $k$.

A classical theorem by Nijenhuis and Richardson states that if 
the second Chevalley-Eilenberg adjoint cohomology group vanishes ($H^2(\mu,\mu)=0$), 
then $\mu$ is rigid in $\L_n$ and the first example shows that 
the reciprocal does not hold was provided in \cite{R}.
This result can be adapted to extend it to other varieties of Lie algebras. 
A general strategy is discussed by Remm in \cite[Sections 2.2 and 2.3]{Remm}.
We include a self-contained proof of this fact in the Appendix
and make explicit the corresponding statements for the varieties
$\N_{n,k}$ and $\S_{n,k}$.
For $\mu\in\N_{n,k}$, the vanishing of the space 
\begin{align*}
H^2_{k\textrm{-nil}}(\mu,\mu)&=\frac{Z_{N_{n,k}}^2(\mu,\mu)}{B^2(\mu,\mu)}\\
&=\frac{Ker(\delta)\bigcap Ker(\eta_k)}{Im(\delta^1)},
\end{align*}
with $\delta:\Lambda^2({\K^n}^*)\rightarrow\Lambda^3({\K^n}^*)$ given by: 
$$
\delta\omega(x,y,z):=\sc \mu(\omega(x,y),z)+\sc \omega(\mu(x,y),z),
$$ 
$\delta^1:\Lambda^1({\K^n}^*)\rightarrow\Lambda^2({\K^n}^*)$ given by:
$$\delta^1(f)(x,y)=\mu(f(x),y)+\mu(x,f(y))-f(\mu(x,y)),$$
and $\eta_k$ given by: 
$$\eta_k(\omega)=
\sum_{j=0}^{k-1}\mu^{k-1-j}\circ\omega\circ\mu^{j},
$$
implies the rigidity of $\mu$ in $\N_{n,k}$.

This description of $H^2_{k\textrm{-nil}}(\mu,\mu)$ was given
in \cite{BCC} in a slightly different form and in a differential geometry context.
In \cite{GR1} the instances for $k=2$ and $k=3$ were also discussed. 

Semisimple Lie algebras are rigid by Whitehead's Lemma,
and a semisimple Lie algebra $\g$ plus a 1-dimensional abelian 
factor $\a$ is also rigid, since its second cohomology group vanishes. 
This fact follows from the Hochschild-Serre spectral sequence
associated with the ideal $\g$ of $\g\oplus\a$.
Also, Borel subalgebras of semisimple Lie algebras have null second cohomology group \cite{LL} 
and hence are rigid. 

A natural and very interesting open question is: 
\begin{itemize}
	\item Are there nilpotent rigid Lie algebras in $\L_n$?
\end{itemize} 
This question, known since 1970 as Vergne's conjecture, 
has not been answered yet.
We believe that the answer is \emph{no}.
This paper, in particular, adds support to our beliefs.

The following stronger versions of this question are also challenging:
\begin{itemize}
	\item Are there $k$-step nilpotent rigid Lie algebras in $\N_{n,k}$?
	
	\item Are there $k$-step nilpotent rigid Lie algebras in $\N_{n,k+1}$?
\end{itemize}

The 3-dimensional Heisenberg Lie algebra is rigid in $\N_{3,2}=\N_{3}$.
Besides this small dimension example we prove that the free $k$-step nilpotent 
Lie algebra on $m$ generators, $L_{(k)}(m)$, is rigid in $\N_{n,k}$ where $n=\dim L_{(k)}(m)$. 
The result follows by proving that their second nil-cohomology vanishes.
In an analogous way one can show the $(2n+1)$-dimensional Heisenberg Lie 
algebra is rigid in $\N_{2n+1,2}$, something that was proved in \cite{GR1} 
and \cite {A} by different means.

Regarding the second question, up to our knowledge based on existing 
examples, its answer is (in general) \emph{no}.
In this paper, we provide further classes of examples by constructing
non-trivial linear deformations.
In particular we show that the free $k$-step nilpotent Lie algebra
$L_{(k)}(m)$ is not rigid $\N_{n,k+1}$ where $n=\dim L_{(k)}(m)$
and similarly the $(2n+1)$-dimensional Heisenberg Lie algebra is not rigid in 
$\N_{2n+1,3}$.

The construction of non-trivial deformations is done by using a novel
construction of linear deformations that we present in Section 3.
With this tool, we tackle the rigidity problem for Lie algebras with
an abelian factor, showing that in general, they are non-rigid.
More precisely, all $k$-solvable Lie algebras plus an abelian
factor are non-rigid in the corresponding variety $\S_{n,k}$
and all $k$-nilpotent Lie algebras plus an abelian factor are non-rigid
in the corresponding variety $\N_{n,k}$, with the only exception of
the 3-dimensional Heisenberg Lie algebra plus a 1-dimensional abelian factor.

%===========================
\section{Some preliminaries}
%===========================

In this paper, $n$ is a fixed natural number and $\K$ an algebraically closed field of characteristic zero.

We consider $n$-dimensional Lie $\K$-algebras.
Through the whole paper, we will refer to a Lie algebra $\g$
or to its Lie bracket $\mu$ indistinctly, according to which notation fits
the exposition better.
We shall mainly use $\mu$ and use $\g$ when the underlying vector space is relevant, for instance, to refer to a subalgebra. We may also write $(\g,\mu)$.

%===================================================
\subsection{Multilinear maps}

Let $V$ be a vector space and let $C^i(V)=\Hom(V^{\otimes i},V)$ be the space of $i$-multilinear maps from $V\times\cdots\times V$ to $V$.
Given $\varphi\in C^i(V)$ and $\psi\in C^j(V)$ let $\varphi\circ\psi \in C^{i+j-1}(V)$ be the multilinear map
defined by
\[ \varphi\circ\psi(x_1,\dots,x_{i+j-1})=\varphi(\psi(x_1,\dots,x_j),x_{j+1},\dots,x_{i+j-1}). \]
Also we define inductively
\[ \varphi^k=\varphi\circ \varphi^{k-1}. \]
Notice that if $f:V\rightarrow V$ is linear and $\varphi\in C^i(V)$, then $f\circ\varphi\in C^i(V)$.

In particular, if $\mu\in C^2(V)$, then $\mu\circ\mu$ is the trilinear map given by
\[ \mu\circ\mu(x,y,z)=\mu(\mu(x,y),z). \]
For a trilinear map $\varphi$ we write
\[ \sc \varphi(x,y,z)=\varphi(x,y,z)+\varphi(y,z,x)+\varphi(z,x,y). \]
Hence, for a bilinear map $\mu$, we have that
\[ \sc \mu\circ\mu (x,y,z)=\mu(\mu(x,y),z)+\mu(\mu(y,z),x)+\mu(\mu(z,x),y). \]
So the Jacobi identity for $\mu$ is
\[ \sc \mu\circ\mu = 0. \]

Given $f,g,h\in V^*$, the dual linear space of $V$, 
$f\cdot g:V\times V\rightarrow \K$ and 
$f\cdot g\cdot h:V\times V\times V\rightarrow \K$ are the
bilinear and trilinear maps defined by
\[ f\cdot g\ (x,y)=f(x)g(y) \quad\text{and}\quad f\cdot g\cdot h\ (x,y,z)=f(x)g(y)h(z). \]

A direct computation yields the following result that we shall use 
in the next section.
\begin{lemma}\label{lemma:sc}
Given $f,g,h\in V^*$, it holds that $\sc(f\cdot g-g\cdot f)\cdot f=0$,
$\sc(f\cdot g-g\cdot f)\cdot g=0$ and 
$\sc(f\cdot g-g\cdot f)\cdot h=\sc(f\cdot h-h\cdot f)\cdot g$.
\end{lemma}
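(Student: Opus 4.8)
The plan is to verify all three identities by a single direct expansion. Write $f_i$ for the value of $f$ on the $i$-th argument of a trilinear map, and similarly $g_i$, $h_i$; by the definitions above $(p\cdot q\cdot r)(x,y,z)=p_1q_2r_3$ for $p,q,r\in V^*$, and applying $\sc$ merely adds the two cyclic relabellings of the arguments. Performing this for $\sc (f\cdot g-g\cdot f)\cdot h$ produces a signed sum of the six monomials $f_\alpha g_\beta h_\gamma$ with $\{\alpha,\beta,\gamma\}=\{1,2,3\}$, which one recognizes as the $3\times3$ determinant
\[
\sc\,(f\cdot g-g\cdot f)\cdot h\ (x,y,z)=\det\begin{pmatrix} f(x)&g(x)&h(x)\\ f(y)&g(y)&h(y)\\ f(z)&g(z)&h(z)\end{pmatrix}.
\]

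Granting this identity, the three statements drop out from standard properties of determinants. Taking $h=f$ or $h=g$ repeats a column, which forces $\sc (f\cdot g-g\cdot f)\cdot f=0$ and $\sc (f\cdot g-g\cdot f)\cdot g=0$. For the remaining identity one applies the displayed formula to $\sc (f\cdot h-h\cdot f)\cdot g$ as well, obtaining the determinant of the same shape with the $g$- and $h$-columns interchanged; comparing the two expansions term by term then yields the relation between $\sc (f\cdot g-g\cdot f)\cdot h$ and $\sc (f\cdot h-h\cdot f)\cdot g$ (one should keep track of the sign produced by that column transposition).

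The argument is purely computational, so there is no conceptual obstacle; the only delicate point is the bookkeeping — after each cyclic shift inside $\sc$ one must record correctly which of $x,y,z$ each functional is evaluated on, together with the accompanying signs. In practice I would first write out the three cyclic terms of $\sc (f\cdot g-g\cdot f)\cdot h$ explicitly, collect them into the displayed determinant identity once and for all, and then read off all three claims as immediate corollaries, which is considerably cleaner than expanding each identity on its own.
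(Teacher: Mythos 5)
Your determinant identity is correct: expanding the cyclic sum does give
\[
\sc\,(f\cdot g-g\cdot f)\cdot h\ (x,y,z)=\det\begin{pmatrix} f(x)&g(x)&h(x)\\ f(y)&g(y)&h(y)\\ f(z)&g(z)&h(z)\end{pmatrix},
\]
the three cyclic terms of $f\cdot g\cdot h$ supplying the even permutations and those of $-g\cdot f\cdot h$ the odd ones. This is a clean way to organize what the paper dispatches as ``a direct computation,'' and it settles the first two identities immediately (repeated column). The problem is the third identity, precisely at the point you relegate to a parenthesis. Interchanging the $g$- and $h$-columns multiplies the determinant by $-1$, so your method yields
\[
\sc\,(f\cdot g-g\cdot f)\cdot h\;=\;-\,\sc\,(f\cdot h-h\cdot f)\cdot g,
\]
which is the \emph{negative} of the equality asserted in the lemma. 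This is not a bookkeeping subtlety you can defer: the two sides genuinely differ in sign. For instance, with $V=\K^3$, $f=e_1^*$, $g=e_2^*$, $h=e_3^*$ and $(x,y,z)=(e_1,e_2,e_3)$, the left-hand side equals $1$ while the right-hand side equals $-1$. So as written your argument does not (and cannot) establish the third identity in the form printed; the statement itself is off by a sign, and your own computation is exactly what detects this. You should state the corrected identity explicitly rather than leave ``the relation'' unspecified.

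For what it is worth, the sign discrepancy is harmless downstream: the only place the third identity is invoked is the cancellation $\sc(S_{24}+S_{35})=0$ in the proof of Theorem \ref{thm:main-construction}, where the hypothesis $a_1^*\circ\ad_{a_1}\circ\pi_\h=-a_2^*\circ\ad_{a_2}\circ\pi_\h$ contributes a compensating minus sign, so the conclusion holds with either version of the identity (most transparently because your $\det[\,p\,|\,q\,|\,r\,]$ is totally antisymmetric in $p,q,r$). But a complete review of your proof has to record that the third claim, as stated, is false, and that the correct statement carries a minus sign.
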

\color{black}

%=================================================================
\subsection{Varieties of Lie algebras}

Let $V=\K^n$.
The variety $\L_n$ of $n$-dimensional Lie $\K$-algebras is the
affine algebraic variety of all the antisymmetric maps $\mu\in C^2(V)$ satisfying $\sc \mu\circ\mu=0$.

The subvariety $\N_{n}$ of $n$-dimensional nilpotent Lie $\K$-algebras, 
is composite by those Lie brackets $\mu$ such that $\mu^j=0$, 
for some $j\ge 1$.
A Lie algebra $\mu$ is said to be $k$-step nilpotent, for $k\ge 2$, 
if $\mu^k=0$ and $\mu^{k-1}\ne 0$.
We consider the abelian Lie algebra $\mu=0$ as 1-step nilpotent.
The subvariety $\N_{n,k}$ of $n$-dimensional nilpotent Lie algebras at most $k$-step
nilpotent is then composite by all Lie brackets $\mu$ such that 
$\mu^k=0$.
Notice that $\N_{n,k}\subset\N_{n,k+1}$ and that $\N_{n,n-1}=\N$.

The subvarieties of solvable and $k$-step solvable Lie algebras
are defined analogously by considering $\mu^{(k)}$ instead of $\mu^k$,
where $\mu^{(1)}=\mu$ and for $k\ge 2$
\[ \mu^{(k)}=\mu\big(\mu^{(k-1)},\mu^{(k-1)}\big). \]

\paragraph{\emph{Orbits and rigidity}}
The orbit $\O(\mu)$ in $\L_n$ of a Lie bracket $\mu$ under the action of
$\operatorname{GL}_n(\K)$ given by \emph{change of basis} is the isomorphism class of $\mu$.
Clearly, if $\mu$ is in any of the subvarieties described above,
its orbit $\O(\mu)$ is contained in it. 
A Lie algebra $\mu$ in a subvariety $\L'$ of these is said to be rigid in $\L' $
if its orbit is open in $\L'$.

\smallskip

\paragraph{\emph{$k$-rigidity for nilpotent Lie brackets}}
Given a nilpotent Lie bracket $\mu$ of $\K^n$,
we say that it is \emph{$k$-rigid}, if it is rigid
in $\N_{n,k}$.

Given a $k$-step nilpotent Lie bracket $\mu$ of $\K^n$ we may ask for the
smallest $j$, $j\ge k$, such that $\mu$ is not rigid in $\N_{n,j}$,
if such a $j$ exists.
According to the available evidence we have, in general, the 
smallest $j$ is just $k+1$. 
That is, there is no known $k$-step nilpotent Lie algebra
which is $(k+1)$-rigid.
This paper adds more evidence to the negative answer of the question:
\begin{itemize}
	\item Are there $k$-step nilpotent Lie brackets which are 
	$(k+1)$-rigid?
\end{itemize}
To prove $k$-rigidity, we will use the Corollary \ref{coro:k-rig} of the appendix. To prove non-$k$-rigidity, we will construct non-trivial linear deformations.

%=================================================================
\subsection{Linear deformations}\label{subsec:lin-def}

For a general presentation of the theory of deformations, including Lie algebras, the reader may refer to \cite{MM}.

Given a Lie bracket $\mu$ of $\K^n$, we shall consider 
\emph{linear deformations} of $\mu$, that is deformations
of the form $\mu_t = \mu + t\varphi$, where t is a parameter in $\K$.
It is straightforward to verify that $\mu_t$ is a Lie algebra for all $t$ if and only if $\varphi$ is a Lie algebra and a 2-cocycle for 
$\mu$, that is $\sc\mu\circ\varphi+\varphi\circ\mu=0 $.

\

\paragraph{\emph{Grunewald-O'Halloran construction}}

Given a Lie algebra $\g$ with bracket $\mu$, an ideal $\h$ of codimension 1 and a derivation $D$ of $\h$, Grunewald and O’Halloran \cite{GH2} considered the linear deformation of $\mu$
\[ \mu_t = \mu + t\varphi_D, \]
where $\varphi_D$ is the (Dixmier) 2-cocycle defined by
\[ \varphi_D(x,h) = D(h)=-\varphi_D(h,x), \quad \varphi_D(h,h') = 0, \]
for $h,h'\in\h$ and $x$ a fixed element outside of $\h$.
Notice that $\h$ remains an ideal of $\mu_t$, for all $t$.

%====================================================
\section{A novel construction of linear deformations}
%====================================================

In that follows, we construct linear deformations of a given Lie algebra $\g$ with Lie bracket $\mu$ starting from a subalgebra $\h$ of $\g$ of codimension 2.

Fix $a_1,a_2\in\g$ such that $\langle a_{1},a_{2}\rangle$ is a complementary subspace to $\h$, i.e.\
\[ \g=\langle a_{1},a_{2}\rangle\oplus\h.\]
For a basis $\{h_1,\dots,h_{n-2}\}$ of $\h$, 
\[B=\{a_1,a_2,h_1,\dots,h_{n-2}\}\]
 is a basis of $\g$.
Let $B^*=\{a_1^*,a_2^*,h_1^*,\dots,h_{n-2}^*\}$ be the dual basis
of $B$.
Hence, given $x\in\g$, there is a unique $x_\h\in\h$ such that
\[ x=a_1^*(x) a_{1}+a_2^*(x) a_{2}+x_\h. \]
We denote the linear map $x\mapsto x_h$ by $\pi_\h$.

By $\ad_x$ we denote the adjoint of $x\in\g$, $\ad_x:\g\to\g$.
For $h\in\h$, we denote by $\ad_h^{\h}$ the adjoint of $h$ restricted to $\h$, $\ad_h^{\h}:\h\to\h$.

In addition, for $y\in\g$, we shall consider the antisymmetric bilinear map $\varphi=a_{1}^*\wedge a_{2}^*\otimes y$\color{black}. 
Recall that, for all $u,v\in\g$,
\begin{eqnarray*}
  (a_{1}^*\wedge a_{2}^*\otimes y)(u,v) &=& 
    (a_1^*\cdot a_2^*-a_2^*\cdot a_1^*)(u,v) y \\
    &=& (a_1^*(u)a_2^*(v)-a_2^*(u)a_1^*(v))y.
\end{eqnarray*}
In particular $\vp(\h,\g)=0$. Finally notice that $\varphi$ is a Lie bracket isomorphic to a $3$-dimensional Heisenberg Lie algebra 
plus an $(n-3)$-dimensional abelian Lie algebra so that $\varphi$ 
is a Lie bracket.

\begin{theorem}\label{thm:main-construction}
Let  $(\g,\mu)$ be a Lie algebra and $\h \subseteq \g$ a subalgebra of codimension 2. 
Fix $a_1,a_2\in\g$ such that $\g= \langle a_{1},a_{2}\rangle\oplus\h$ and $a_1^*\circ\ad_{a_1} + a_2^*\circ\ad_{a_2} = 0$ in $\h$. Then for any $y\in Z_\g(\h)$,
\[ \mu_t=\mu + t (a_{1}^*\wedge a_{2}^*\otimes y) \]
is a linear deformation of $\mu$. 
\end{theorem}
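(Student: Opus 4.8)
To show that $\mu_t = \mu + t\varphi$ with $\varphi = a_1^*\wedge a_2^*\otimes y$ is a linear deformation, by the criterion recalled in Subsection~\ref{subsec:lin-def} it suffices to check two things: that $\varphi$ is a Lie bracket, and that $\varphi$ is a $2$-cocycle for $\mu$, i.e.\ $\sc\,(\mu\circ\varphi + \varphi\circ\mu) = 0$. The first point is already disposed of in the text just before the theorem (the bracket $a_1^*\wedge a_2^*\otimes y$ is isomorphic to a Heisenberg algebra plus an abelian factor). So the entire content of the proof is the cocycle identity, and that is where the hypotheses $a_1^*\circ\ad_{a_1} + a_2^*\circ\ad_{a_2} = 0$ on $\h$ and $y\in Z_\g(\h)$ must be used.

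**Main computation.** I would expand $\sc\,(\mu\circ\varphi + \varphi\circ\mu)(u,v,w)$ directly on basis vectors, exploiting that $\varphi(u,v) = \big(a_1^*(u)a_2^*(v) - a_2^*(u)a_1^*(v)\big)y$ vanishes unless both arguments lie outside $\h$; in particular $\varphi$ is entirely concentrated on the pair $(a_1,a_2)$, with $\varphi(a_1,a_2) = y = -\varphi(a_2,a_1)$ and $\varphi$ zero on any pair involving an element of $\h$. Consequently $\sc$ lets me reduce to evaluating on triples of the form $(a_1, a_2, h)$ with $h\in\h$ (triples lying in $\h$ or containing two copies' worth of the $a$'s annihilate everything by antisymmetry and by $\varphi$ being supported on $\langle a_1,a_2\rangle$). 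For such a triple:
\begin{itemize}
\item The term $\varphi\circ\mu$ contributes $\varphi(\mu(a_1,a_2),h) + \varphi(\mu(a_2,h),a_1) + \varphi(\mu(h,a_1),a_2)$; since $\varphi$ needs both entries outside $\h$, only the pieces of $\mu(a_2,h)$ and $\mu(h,a_1)$ along $a_1, a_2$ survive, and these are governed exactly by $a_i^*(\ad_{a_1}h)$ and $a_i^*(\ad_{a_2}h)$ — this is where $a_1^*\circ\ad_{a_1} + a_2^*\circ\ad_{a_2}=0$ on $\h$ is needed to make the total vanish.
\item The term $\mu\circ\varphi$ contributes $\mu(\varphi(a_1,a_2),h) + \mu(\varphi(a_2,h),a_1) + \mu(\varphi(h,a_1),a_2) = \mu(y,h)$, since the last two summands have a vanishing $\varphi$. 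Now $y\in Z_\g(\h)$ forces $\mu(y,h)=0$.
\end{itemize}
Adding up, one gets $0$, which is the cocycle condition. I would also double-check the triples $(a_1,a_2,a_i)$ and permutations, but these vanish trivially since $\varphi$ takes values in $\langle y\rangle$ and the relevant terms either repeat an argument or produce $\varphi$ of a pair lying in $\h\cup\langle$one $a\rangle$.

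**Anticipated obstacle.** The only real subtlety is bookkeeping: correctly tracking the $a_1^*$- and $a_2^*$-components of $\mu(a_i,h)$ and matching them against the hypothesis $a_1^*\circ\ad_{a_1}+a_2^*\circ\ad_{a_2}=0$ (note the sign coming from antisymmetry of $\mu$, since $\mu(h,a_1) = -\ad_{a_1}h$). Lemma~\ref{lemma:sc} may streamline the manipulation of the $\sc$ of products of the dual forms $a_1^*, a_2^*$, so I would invoke it to avoid a brute-force expansion. Once the reduction to the triple $(a_1,a_2,h)$ is made cleanly, the rest is a short verification; the hypotheses have been chosen precisely so that the two surviving obstructions — one from $\mu\circ\varphi$ and one from $\varphi\circ\mu$ — each vanish on its own.
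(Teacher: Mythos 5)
Your proposal is correct and reaches the conclusion, but by a genuinely different route from the paper. The paper never evaluates on basis vectors: it expands $\mu\circ\varphi$ and $\varphi\circ\mu$ on arbitrary $u,v,w$ as combinations of triple products of the functionals $a_1^*$, $a_2^*$ and $a_i^*\circ\ad_{a_j}\circ\pi_\h$, and kills the cyclic sums term by term with Lemma~\ref{lemma:sc}, the hypothesis $a_1^*\circ\ad_{a_1}+a_2^*\circ\ad_{a_2}=0$ entering only at the end to cancel the surviving pieces of $S_{24}$ and $S_{35}$ against each other. You instead observe that $\sc\,(\mu\circ\varphi+\varphi\circ\mu)$ is alternating (being the cyclic sum of trilinear maps antisymmetric in their first two slots), so it need only be tested on triples of distinct basis vectors, after which the support condition $\varphi(\h,\g)=0$ collapses everything to the single family $(a_1,a_2,h)$. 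Your computation on that family is right and isolates cleanly where each hypothesis acts: $y\in Z_\g(\h)$ kills $\sc\,\mu\circ\varphi(a_1,a_2,h)=\mu(y,h)$, while the trace-type hypothesis kills $\sc\,\varphi\circ\mu(a_1,a_2,h)=-\bigl(a_1^*(\ad_{a_1}h)+a_2^*(\ad_{a_2}h)\bigr)y$. Your route is more elementary and more transparent about the role of the hypotheses; the paper's is coordinate-free and avoids the reduction-to-a-basis step at the price of the Lemma~\ref{lemma:sc} bookkeeping. Both, like the paper, actually prove the stronger statement that the two cyclic sums vanish separately.

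One point to tighten: your stated dichotomy for the discarded triples (``lying in $\h$ or containing two copies' worth of the $a$'s'') omits the mixed triples $(a_i,h,h')$ with $h,h'\in\h$. These do vanish, but not purely for support reasons: the term $\varphi(\mu(h,h'),a_i)$ dies only because $\mu(h,h')\in\h$, i.e.\ because $\h$ is a \emph{subalgebra} --- the same place the paper uses $[u_\h,v_\h]\in\h$. Make that case explicit and the argument is complete.
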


\begin{proof}
Let us denote $\mu=[\ ,\ ]$ and $a_{1}^*\wedge a_{2}^*\otimes y=\vp$.
Since $\vp$ is already a Lie bracket, it remains to show that $\vp$
is a 2-cocycle for $\mu$, that is
\[ \sc \left([\ ,\ ] \circ \varphi + \varphi \circ [\ ,\ ]\right) =0. \]
We show that moreover $\sc [\ ,\ ] \circ \varphi=0$ and 
$\sc \varphi \circ [\ ,\ ]=0$.

On the one hand, let $u,v,w\in\g$. $\left([\ ,\ ]\circ\vp\right)(u,v,w)=[\vp(u,v),w]$ and 
\begin{eqnarray*}
 [\vp(u,v),w] &=& \left[\ (a_1^*a_2^*-a_2^*a_1^*)(u,v)\  y\ ,\ a_1^*(w)a_1+a_2^*(w)a_2+w_\h\ \right] \\
              &=& ((a_1^*a_2^*-a_2^*a_1^*)a_1^*)(u,v,w)\ [y,a_1] \\
              && \qquad +((a_1^*a_2^*-a_2^*a_1^*)a_2^*)(u,v,w)\ [y,a_2] \\
              && \qquad + (a_1^*a_2^*-a_2^*a_1^*)(u,v)\ [y,w_\h] \\
              &=& 0.
\end{eqnarray*}
The first two terms are equal to $0$ by Lemma \ref{lemma:sc} and
the third one is $0$ because $y\in Z_\g(\h)$.

On the other hand, let $u,v,w\in\g$, $(\vp\circ[,])(u,v,w)=\vp([u,v],w)$.
Writing 
\begin{gather*}
u=a_1^*(u)a_1+a_2^*(u)a_2+u_\h \\
v=a_1^*(v)a_1+a_2^*(v)a_2+v_\h,
\end{gather*}
it follows that
\begin{eqnarray*}
[u,v] &=& \left(a_1^*(u)a_2^*(v)-a_2^*(u)a_1^*(v)\right)[a_{1},a_{2}] +[u_\h,v_\h]+ 
a_1^*(u)[a_{1},v_\h] \\
      && \qquad + a_2^*(u)[a_{2},v_\h] + a_1^*(v)[u_\h,a_{1}] + a_2^*(v)[u_\h,a_{2}],
\end{eqnarray*}
and since $[u_\h,v_\h]\in\h$ we have that
\begin{eqnarray*}
\vp([u,v],w) &=& (a_1^*(u)a_2^*(v)-a_2^*(u)a_1^*(v))\vp([a_{1},a_{2}],w) + a_1^*(u)\vp([a_1,v_\h],w) \\
             &+& a_2^*(u)\vp([a_{2},v_\h],w) - a_1^*(v)\vp([a_{1},u_\h],w) - a_2^*(v)\vp([a_{2},u_\h], w).
\end{eqnarray*}
The first term is equal to
\begin{eqnarray*}
 S_1(u,v,w) &=& ((a_1^*\cdot a_2^*-a_2^*\cdot a_1^*)\cdot a_2^*)(u,v,w)\ a_1^*([a_{1},a_{2}])\ y \\
  && \qquad - ((a_1^*\cdot a_2^*-a_2^*\cdot a_1^*)\cdot a_1^*)(u,v,w)\ a_2^*([a_{1},a_{2}])\ y;
\end{eqnarray*}
the sum of the second and forth terms is equal to 
\begin{eqnarray*}
S_{24}(u,v,w) &=& (a_1^*\cdot(a_1^*\circ\ad_{a_1}\circ\pi_\h)-(a_1^*\circ\ad_{a_1}\circ\pi_\h)\cdot a_1^*)\cdot a_2^*\ (u,v,w)\ y+\\
&& \qquad ((a_2^*\circ\ad_{a_1}\circ\pi_\h)\cdot a_1^*-a_1^*
               \cdot (a_2^*\circ\ad_{a_1}\circ\pi_\h))\cdot a_1^*\ (u,v,w)\ y;
\end{eqnarray*}
and the sum of the third and fifth terms is equal to
\begin{eqnarray*}
S_{35}(u,v,w) &=& (a_2^*\cdot (a_1^*\circ\ad_{a_2}\circ\pi_\h)
     -(a_1^*\circ\ad_{a_2}\circ\pi_\h)\cdot a_2^*)\cdot a_2^*\ (u,v,w)\ y+\\
&& \qquad ((a_2^*\circ\ad_{a_2}\circ\pi_\h)\cdot a_2^*
      -a_2^*\cdot (a_2^*\circ\ad_{a_2}\circ\pi_\h))\cdot a_1^*\ (u,v,w)\ y.	
\end{eqnarray*}
Now we have that, by Lemma \ref{lemma:sc},
\[ \sc S_1(u,v,w)=0. \]
Also by the same lemma, 
\begin{eqnarray*}
  \sc S_{24}(u,v,w) &=& \sc (a_1^*\cdot(a_1^*\circ\ad_{a_1}\circ\pi_\h)
      -(a_1^*\circ\ad_{a_1}\circ\pi_\h)\cdot a_1^*)\cdot a_2^*\ (u,v,w)\ y \\
  \sc S_{35}(u,v.w) &=& \sc ((a_2^*\circ\ad_{a_2}\circ\pi_\h)\cdot a_2^*
      -a_2^*\cdot (a_2^*\circ\ad_{a_2}\circ\pi_\h))\cdot a_1^*\ (u,v,w)\ y.
\end{eqnarray*}
Finally, by hypothesis  
$a_1^*\circ\ad_{a_1}\circ\pi_\h=-a_2^*\circ\ad_{a_2}\circ\pi_\h$. 
This implies, by using Lemma \ref{lemma:sc}, that
\[ \sc (S_{24}+S_{35})=0 \]
and therefore

\[ \sc \vp \circ [\ ,\ ] = 0 \]

as we wanted to prove.

\end{proof}

\begin{corollary}\label{coro:main-construction}
Let  $(\g,\mu)$ be a nilpotent Lie algebra and 
 $\h \subseteq \g$ a subalgebra  of codimension 2. Fix $a_1,a_2\in\g$ be such that
$\g=\langle a_{1},a_{2}\rangle\oplus\h$. \color{black}
Then for any $y\in Z_\g(\h)$ 
$$\mu_t=\mu + t (a_{1}^*\wedge a_{2}^*\otimes y)$$
is a linear deformation of $\mu$.
\end{corollary}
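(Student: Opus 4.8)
The plan is to deduce Corollary \ref{coro:main-construction} from Theorem \ref{thm:main-construction} by showing that, in the nilpotent setting, the extra hypothesis $a_1^*\circ\ad_{a_1}+a_2^*\circ\ad_{a_2}=0$ on $\h$ can always be arranged after a suitable change of the complementary pair $a_1,a_2$. In other words, the point is that nilpotency buys us exactly the freedom needed to normalise the "trace-like" linear functional $h\mapsto a_1^*([a_1,h])+a_2^*([a_2,h])$ to zero on $\h$, without changing $\h$ itself.

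First I would introduce the linear functional $\tau\in\h^*$ defined by $\tau(h)=a_1^*([a_1,h])+a_2^*([a_2,h])$ for $h\in\h$; the hypothesis of Theorem \ref{thm:main-construction} is precisely $\tau=0$. Next I would observe how $\tau$ transforms under replacing $a_1,a_2$ by another basis $a_1',a_2'$ of the complement $\langle a_1,a_2\rangle$ modulo $\h$; more generally one may replace $a_i$ by $a_i'=\sum_j c_{ij}a_j + b_i$ with $(c_{ij})\in\GL_2(\K)$ and $b_i\in\h$, which still gives $\g=\langle a_1',a_2'\rangle\oplus\h$ and, crucially, leaves $\varphi=a_1^*\wedge a_2^*\otimes y$ unchanged up to a nonzero scalar (hence the deformation is the same up to rescaling $t$) — here one uses that $a_1'^*\wedge a_2'^* = \det(c)^{-1}\, a_1^*\wedge a_2^*$ as elements of $\Lambda^2\h^\perp$, since the $b_i$ contribute nothing to the wedge on the quotient. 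The key computation is then to track $\tau$ under this change and show that, because $\mu$ is nilpotent, one can choose $b_1,b_2\in\h$ (and if needed adjust $c_{ij}$) so that the new functional $\tau'$ vanishes identically on $\h$; the effect of adding $b_i\in\h$ to $a_i$ changes $\tau$ by $h\mapsto a_1^*([b_1,h])+a_2^*([b_2,h])$, but $a_i^*$ vanishes on $[\h,\h]\subseteq\h$, so this route is inert and the real mechanism must be the nilpotency of $\ad$ forcing the relevant traces to vanish. Concretely, for $\mu$ nilpotent each $\ad_{a_i}$ is nilpotent, so $a_i^*\circ\ad_{a_i}$ evaluated appropriately has "trace zero" features; I expect the clean statement to be that $\tau$ already lies in a subspace on which a single rescaling $a_1\mapsto a_1,\ a_2\mapsto a_2$ composed with a generic $\GL_2$-rotation kills it, or alternatively that one exhibits $a_1',a_2'$ explicitly via a strictly-upper-triangular adapted basis of $\g$ for which $[a_i',\h]$ has no component along $a_i'$.

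Once the pair $a_1',a_2'$ with $a_1'^*\circ\ad_{a_1'}+a_2'^*\circ\ad_{a_2'}=0$ on $\h$ is produced, I would note that $\h$ is still a codimension-2 subalgebra, $Z_\g(\h)$ is unchanged, and $a_1'^*\wedge a_2'^*\otimes y$ is a nonzero scalar multiple of $a_1^*\wedge a_2^*\otimes y$, so applying Theorem \ref{thm:main-construction} to the primed data gives that $\mu + s(a_1'^*\wedge a_2'^*\otimes y)$ is a linear deformation for all $s$, hence so is $\mu + t(a_1^*\wedge a_2^*\otimes y)$ after absorbing the scalar into $t$. This completes the argument.

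The main obstacle I anticipate is the normalisation step: proving that nilpotency of $\mu$ genuinely lets one force $\tau=0$ by an admissible change of $a_1,a_2$ — in particular identifying the correct two-dimensional $\GL_2$-action on the pair and checking that $\tau$ lands in its span of attainable values. If it turns out that the obstruction $\tau$ is not always killable, a fallback is to instead reprove the cocycle identity directly for nilpotent $\mu$ without the hypothesis, rerunning the computation in the proof of Theorem \ref{thm:main-construction} and using $\mu^N=0$ together with Lemma \ref{lemma:sc} to see that the terms $S_{24}$ and $S_{35}$ cancel cyclically even when $\tau\neq 0$; I would check the small-rank case $\h$ abelian first as a sanity check, where $\ad_{a_i}\circ\pi_\h$ restricted to $\h$ lands in $\h$ and the relevant functionals $a_j^*\circ\ad_{a_i}\circ\pi_\h$ vanish on $\h$ outright, making the hypothesis automatic.
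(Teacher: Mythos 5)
Your overall strategy --- reduce to Theorem \ref{thm:main-construction} by arranging the hypothesis $a_1^*\circ\ad_{a_1}+a_2^*\circ\ad_{a_2}=0$ on $\h$ --- points in the right direction, but the crucial step is left as a conjecture and, as formulated, your normalisation mechanism cannot supply it. The functional $\tau(h)=a_1^*([a_1,h])+a_2^*([a_2,h])$ is in fact \emph{invariant} under every admissible change of the complementary pair: you already observe that translating $a_i\mapsto a_i+b_i$ with $b_i\in\h$ is ``inert'' because $a_i^*$ kills $[\h,\h]\subseteq\h$, and a short computation shows that the $\GL_2$-action $a_i'=\sum_j c_{ij}a_j$ also fixes $\tau$, since $\tau$ is the trace of the $2\times 2$ block $\bigl(a_i^*([a_j,\cdot])\bigr)_{i,j}$ and traces are unchanged under conjugation of that block. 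So there is nothing to normalise: either $\tau$ vanishes for the original pair already, or the reduction fails outright, and your proposal never establishes the former.

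The missing idea, which is the entire content of the paper's proof, is a trace argument. Since $\h$ is a subalgebra, $\ad_h(\h)\subseteq\h$ for $h\in\h$, so in the basis $\{a_1,a_2,h_1,\dots,h_{n-2}\}$ the matrix of $\ad_h$ is block triangular and
\[
\tr(\ad_h)=a_1^*([h,a_1])+a_2^*([h,a_2])+\tr(\ad_h^{\h}).
\]
Both $\g$ and its subalgebra $\h$ are nilpotent, so $\ad_h$ and $\ad_h^{\h}$ are nilpotent operators and both traces vanish; hence $a_1^*([h,a_1])+a_2^*([h,a_2])=0$, which is exactly $a_1^*\circ\ad_{a_1}(h)+a_2^*\circ\ad_{a_2}(h)=0$ for all $h\in\h$ up to an overall sign. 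With this identity the hypotheses of Theorem \ref{thm:main-construction} hold verbatim for the given $a_1,a_2$, and the corollary follows with no change of basis at all. Your fallback (re-running the cocycle computation without the hypothesis) is unnecessary and is in any case not carried out, so as it stands the proposal does not constitute a proof.
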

 
\begin{proof}
Both $\g$ and $\h$ are nilpotent Lie algebras, hence 
$\tr(\ad_x)=0$, for all $x\in \g$ and $\tr(\ad_h^{\h})=0$, 
for all $h\in \h$.
Since 
\[ \tr(\ad_h)=a_{1}^*([h,a_1])+a_{2}^*([h,a_2])+\tr(\ad_h^{\h}), \] 
it follows that

\[
0=a_1^*\circ\ad_{a_1} (h) + a_2^*\circ\ad_{a_2}(h).
\]
Thus the hypotheses of Theorem \ref{thm:main-construction} are satisfied. 
\end{proof}

\begin{remark}
If $\g$ is  nilpotent, this construction is a particular case of the Grunewald-O'Halloran construction \cite{GH2}.
This follows from two easy arguments. 
First, all subalgebras of codimension $2$ can be extended to an ideal of codimension $1$.
[Given $\langle a_1,a_2\rangle$ a direct linear complement of $\h$ in $\g$, then
either $a_1$ or $a_2$ are not in $[\g,\g]$. In fact if both are in $[\g,\g]$, since $\g$ is nilpotent and $\h$ is a subalgebra, 
then $a_1=[a_2,h_1]$ and $a_2=[a_1,h_2]$ for some $h_1,h_2\in\h$. But this is not possible for $\g$ nilpotent.]
Then we can assume that $\g=\langle a_1\rangle\oplus I$, with $\h\subseteq I$. 
Finally, the linear function of $I$ that sends $a_2$ to $y$ and the rest of elements of a basis to $0$, is a derivation of $I$.
\end{remark}

In general, our construction is different from that in \cite{GH2}, as the following two examples show.

\begin{example}
Let $(\g=\langle a_1,a_2,y\rangle,\mu)$ with Lie bracket 
\begin{gather*}
\mu(a_1,y)=2a_1\\
\mu(a_2,y)=2a_2\\
\mu(a_1,a_2)=0
\end{gather*}

If we take $\h=\langle y\rangle$, it satisfies the hypotheses of the Theorem \ref{thm:main-construction}. It follows that $\mu_t$ is isomorphic to $\sl_2(\K)$ for all $t\ne 0$.
Since $\sl_2(\K)$ has no ideals, the deformation $\mu_t$ is not Grunewald-O'Halloran's type.
\end{example}

\begin{example} 
Let $(\h,\nu)$ be a non-perfect Lie algebra with non-trivial center 
and let $f:\h\rightarrow \K$ be  a non-zero linear map such that $f(\nu(\h,\h))=0$.
Define the Lie algebra $(\g,\mu)$ by taking 
$\g=\langle a_1,a_2 \rangle \oplus \h$ with $\mu$ defined by:
\begin{gather*}
\mu_{|_{\h\times\h}}=\nu, \\
\mu(a_1,h)=f(h)a_2, \\
\mu(a_2,h)=f(h)a_1, \\
\mu(a_1,a_2)=0,
\end{gather*}
for all $h\in\h$. 
By taking $y\neq 0\in Z(\h)$, the hypotheses of 
Theorem \ref{thm:main-construction} are satisfied. 
The corresponding linear deformation $\mu_t$ is not of 
Grunewald-O'Halloran type.

Indeed, if $\mu_t=\mu + t (a_{1}^*\wedge a_{2}^*\otimes y)$ were of that type, it would exist an ideal 
$I\triangleleft\g$ of codimension one, $x\in\g$ and $D\in \Der(I)$ such that $\g=\langle x\rangle\oplus I$ and  
\begin{gather}\label{eqn:mulocal}
\mu_t(i_1,i_2)=\mu(i_1,i_2)\\
\mu_t(x,i) =\mu(x,i)+tD(i)
\end{gather}
for all $i, i_1,i_2 \in I$.
Since $I\triangleleft\g$ has codimension one, $\mu(\g,\g)\subseteq I$  and then $a_1,a_2\in I$.
Now from \eqref{eqn:mulocal} we get that 
\[ \mu_t(a_1,a_2)=\mu(a_1,a_2). \]  
However by the definition of $\mu_t$, we have that
\[ \mu_t(a_1,a_2)
=(\mu + t (a_{1}^*\wedge a_{2}^*\otimes y))(a_1,a_2)=\mu(a_1,a_2)+ty,
\]
which is not possible for $t\ne 0$. 
Therefore $\mu_t$ is not of Grunewald-O'Halloran type.
\end{example}

%================================================
\subsection{2-step nilpotent graph Lie algebras}

As an example of the previous construction, we deform 2-step nilpotent
graph Lie algebras (see \cite{AAA,BT} for 2-step graph Lie algebras in the degenerations and deformations
framework).

Let $G$ be the graph with vertices $V=\{v_1,\dots,v_m\}$ and
edges $A=\{a_{ij}:(i,j)\in I\}$, $I\subseteq \{1,\dots, n\}\times  \{1,\dots, n\}$.
The graph Lie algebra associated to $G$, $\g_G$, is the $\K$-vector space generated by $V\cup A$, 
where the non-zero brackets of basis elements are

\[
\mu(v_i,v_j)=a_{ij}, \quad\text{if } (i,j)\in I.
\]

Notice that if $A=\emptyset$, then $\g_G=\a_m$, the $m$-dimensional
abelian Lie algebra. 
But in general, if $A\neq \emptyset$, it is a $2$-step nilpotent Lie algebra, i.e $\g_G\in\N_{n,2}$, $n=|V|+|A|$. 

\begin{example}
	The Lie algebra associated with the graph with two vertices and one edge
	is the 3-dimensional Heisenberg Lie algebra $\h_1$.
\end{example}

In general, 2-step nilpotent graph Lie algebras are not 3-rigid, 
as stated precisely in the following theorem.
\begin{theorem}
	Let $\g$ be a $n$-dimensional graph Lie algebra non isomorphic to $\h_1$, $\a_1$ or 
	$\a_2$, then $\g$ is non-rigid in $\N_{n,3}$.
\end{theorem}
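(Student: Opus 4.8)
The strategy is to apply Corollary \ref{coro:main-construction} to exhibit a non-trivial linear deformation $\mu_t=\mu + t(a_1^*\wedge a_2^*\otimes y)$ of a graph Lie algebra $\g=\g_G$ that lands in $\N_{n,3}$ but not in the orbit of $\mu$ for small $t\ne 0$. First I would fix notation: $\g_G$ has basis $V\cup A$ with $V=\{v_1,\dots,v_m\}$ the vertices and $A$ the edge-vectors, and $[\g_G,\g_G]=\langle A\rangle$ is central (this is where 2-step nilpotency is used). The natural choice is to take two vertices $a_1=v_i$, $a_2=v_j$ that span a complement together with $\h=\langle V\setminus\{v_i,v_j\}\rangle\oplus\langle A\rangle$, a subalgebra of codimension $2$, and take $y$ to be any non-zero element of the centre contained in $\h$ — e.g.\ any edge-vector $a_{kl}$ with $k,l\notin\{i,j\}$, or simply a vertex not adjacent to anything in $\g$. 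Corollary \ref{coro:main-construction} then immediately gives that $\mu_t$ is a Lie algebra for all $t$, with no trace condition to check since $\g_G$ is nilpotent.

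Next I would verify $\mu_t\in\N_{n,3}$, i.e.\ $\mu_t^3=0$. Since $\varphi=a_1^*\wedge a_2^*\otimes y$ has image in $\langle y\rangle$ and $\varphi(\h,\g)=0$ with $y\in\h$, one computes directly that brackets of length $3$ in $\mu_t$ vanish: any triple bracket either feeds a $\varphi$-output (which lies in $\h$, hence is killed by the next $\varphi$ and, being in the centre, also by $\mu$) or is a pure $\mu$-bracket of length $3$, which is $0$ as $\mu^2=0$. So $\mu_t$ is at most $3$-step nilpotent and the deformation stays inside the ambient variety $\N_{n,3}$.

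The crux is showing $\mu_t$ is \emph{not isomorphic} to $\mu=\mu_0$ for small $t\ne 0$ — equivalently that $\mu$ is not rigid, which by semicontinuity it suffices to prove by finding any invariant that distinguishes them. The cleanest route: $\mu_0=\g_G$ is $2$-step nilpotent ($\mu^2=0$), whereas $\mu_t$ for $t\ne 0$ is genuinely $3$-step nilpotent, because $[[a_1,a_2]_{\mu_t},h]_{\mu_t}$ need not vanish — wait, it does vanish since $y$ is central. So instead I would distinguish them by the dimension of the derived algebra or the centre: $\mu_t^{(1)}=[\g,\g]_{\mu_t}=\langle A\rangle+\langle y\rangle$; if $y$ is chosen outside $\langle A\rangle$ — take $y$ a vertex with no incident edges, which exists after possibly relabelling, or if no such isolated vertex exists one works slightly harder — then $\dim[\g,\g]_{\mu_t}>\dim[\g,\g]_{\mu_0}$, and derived-algebra dimension is upper-semicontinuous the wrong way, so one instead argues with the centre $Z(\mu_t)$, whose dimension \emph{drops} at $t\ne 0$ and is lower-semicontinuous, giving non-rigidity. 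The hypotheses $\g\not\cong\h_1,\a_1,\a_2$ are exactly what guarantee enough room to make such a choice of $(v_i,v_j,y)$: with $\a_1,\a_2$ there is no edge and too few vertices, and $\h_1$ has only two vertices and one edge so no valid codimension-$2$ subalgebra with a usable central $y$ outside the line $\langle A\rangle$. The main obstacle is precisely this case analysis — organising the choice of the pair of vertices and of the central element $y$ so that the resulting deformation genuinely changes the isomorphism type, and separately handling the handful of small graphs (two vertices, one edge plus isolated points; paths; etc.) where the generic choice degenerates. Once $y\notin[\g,\g]$ is secured, the non-isomorphism follows from a rank/centre invariant as above, completing the proof that $\g$ is non-rigid in $\N_{n,3}$.
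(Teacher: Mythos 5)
Your construction does not in general produce a non-trivial deformation, and the missing idea is precisely the paper's key choice. You take both $a_1=v_i$ and $a_2=v_j$ to be vertices and insist that $y$ be \emph{central} in $\g$; then, as you yourself observe mid-argument, the deformation never leaves $\N_{n,2}$: the only change is $[v_i,v_j]_t=a_{ij}+ty$ (or $ty$), with $y$ and all edges still central. But in that situation the deformation is very often \emph{trivial}. If $v_i$ and $v_j$ are joined by an edge $a_{ij}$, the linear map fixing every basis vector except $a_{ij}\mapsto a_{ij}+ty$ is an isomorphism from $\mu$ to $\mu_t$ (both $a_{ij}$ and $y$ are central for both brackets and $a_{ij}$ occurs in no other structure constant), so $\mu_t\cong\mu_0$. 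In particular, for a complete graph $K_m$ there is no non-adjacent pair and no isolated vertex, so \emph{no} choice allowed by your scheme works. Even for non-adjacent pairs your fallback invariants can fail: for the path $v_1-v_2-v_3$ with $a_1=v_1$, $a_2=v_3$, $y=a_{12}$, the substitution $v_3\mapsto v_3-tv_2$ shows $\mu_t\cong\mu_0$. The "case analysis" you defer is therefore not a technicality; it is where the proof lives, and it cannot be completed with the choices you fix at the outset.

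The paper's proof takes the complementary subspace to be spanned by a vertex and an \emph{edge}: $a_1=v_1$, $a_2=a_{12}\in[\g,\g]$, with $y$ another edge (or a third vertex when $|A|=1$). Then $[v_1,a_{12}]_t=ty\ne 0$, so $\mu_t^2(v_2,v_1,v_1)=[[v_2,v_1]_t,v_1]_t=-ty\ne0$ and $\mu_t$ is genuinely $3$-step nilpotent for $t\ne0$, while $\mu_0$ is $2$-step; non-isomorphism is immediate and no invariant-chasing is needed. (The abelian case $A=\emptyset$ is handled separately by deforming $\a_n$, $n\ge3$, to a $2$-step algebra.) To repair your argument you would need to allow $a_2\in[\g,\g]$, or at least allow $y\in Z_\g(\h)\setminus Z(\g)$, neither of which your setup permits. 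A secondary slip: once an isomorphism invariant (derived algebra or centre dimension) genuinely differs between $\mu_t$ ($t\ne0$) and $\mu_0$, non-rigidity follows directly since $\mu_t\to\mu_0$; no semicontinuity discussion is required, and your worry about the "wrong direction" of semicontinuity is a red herring.
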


\begin{proof}
	Let $\mu$ be the bracket of $\g$.
	In all cases, we construct a non-trivial 3-step nilpotent deformation of $\mu$.
	
	If $A=\emptyset$, $\g\simeq\a^n$ with $n\ge 3$ and $\mu=0$, 
	then [Section 6, item (3)] provides a non-trivial $2$-step deformation of $\mu$.
	
	If $|A|=1$, since $\g\ncong\h_1$, we have that $m>2$. 
	We may assume, by relabeling the vertices if necessary,
	that $A=\{a_{12}\}$.  
	The hypotheses of Corollary \ref{coro:main-construction} are fulfilled for $a_1=v_1$, $a_2=a_{12}$, $\h=\langle V-\{v_1\}\rangle$ and $y=v_3$. 
	Then $\mu_t=\mu + t(v_1\wedge a_{12} \otimes v_3)$ is a linear deformation of $\mu$, which is $3$-step nilpotent for all $t\ne 0$.
	
	If $|A|>1$, we can assume that $a_{12}\in A$ and that there is another edge in $a\in A$.
	The hypotheses of Corollary \ref{coro:main-construction} are fulfilled for $a_1=v_1$, $a_2=a_{12}$, $\h=\langle (V-\{v_1\})\cup (A-\{a_{12}\})\rangle$ and $y=a$. 
	Then $\mu_t=\mu + t(v_1\wedge a_{12} \otimes a)$ is a linear deformation of $\mu$, which is $3$-step nilpotent for all  
	$t\ne 0$.
\end{proof}

%=====================================
\section{Free nilpotent Lie algebras}
%=====================================

Let $L_{(k)}(m)$ be the free $k$-step nilpotent Lie algebra on $m$ generators\color{black}, where $m\ge 2$, and be $n$ its dimension.
In this section, we explore the rigidity of $L_{(k)}(m)$ in the varieties 
$\N_{n,k}$ and $\N_{n,k+1}$, showing that it is rigid in the first one,
but not in the second one.
There is a single exception: $L_{(2)}(2)$, this Lie algebra is isomorphic to the $3$-dimensional Heisenberg Lie algebra, which is rigid in $\N_{3,2}=\N_{3,3}=\N_3$.

\medskip

Let us recall briefly the construction of $L_{(k)}(m)$ and some well-known facts to fix notation.
Given a set of generators $X$, one constructs one after the other, 
the free magma $M(X)$, the free algebra $A(X)$, the free Lie algebra
$L(X)$ and finally the $k$-step free nilpotent Lie algebra on $X$, $L_{(k)}(X)$.

\textbf{The free magma on $X$} is the set $M(X)$ with an operation `$\centerdot$'.
\[ M(X)=\bigcup_{i\in\NN} M_i(X), \]
where $M_1(X)=X$ and for $i\geq 2$,  
$M_{i}(X)=\{p\centerdot q \ | \ p\in M_s(X), q\in M_t(X),s+t=i\}$.
The elements of $M_i(X)$ are said to be of length $i$.

\textbf{The free algebra on $X$} is the algebra $A(X)$ built on the linear space generated by the set $M(X)$
with the bilinear product induced by `$\centerdot$'.
Clearly, it is naturally graded
\[ A(X)=\bigoplus_{i=1}^\infty A_i(X), \]
where $A_i(X)=\langle M_i(X) \rangle$. 
Notice that $A_i(X)\centerdot A_j(X)\subseteq A_{i+j}(X)$.
The elements of $A_i(X)$ are said to be of degree $i$.

\textbf{The free Lie algebra on $X$} is the Lie algebra $(L(X),\lambda)$, constructed as the quotient algebra
\[ L(X)=\frac{A(X)}{I}, \]
where $I$ is the ideal generated by the set $$\{ a\centerdot b+b\centerdot a \ |\ a,b\in A(X) \}
\cup \{\sc\centerdot^2(a,b,c)\ |\ a,b,c\in A(X)\}.$$
The quotient projection is $\pi:A(X)\rightarrow L(X)$. 
Since the ideal $I$ is homogeneous, $L(X)$ is naturally graded
\[ L(X)=\bigoplus_{i=1}^\infty L_i(X), \]
where $L_i(X)=\pi(A_i(X))$.

\textbf{The $k$-step free nilpotent Lie algebra on $X$}  is the Lie algebra $(L_{(k)}(X),\mu)$, constructed as the quotient algebra
\[ L_{(k)}(X)=\frac{L(X)}{L^{k+1}(X)}, \]
where $L^{k+1}(X)$ is the $(k+1)$-th term of the central descending series of $L(X)$,
\[ L^{k+1}(X)=\bigoplus_{i=k+1}^\infty L_i(X). \]
The quotient projection is
$\overline{\phantom{I}}:L(X)\rightarrow L_{(k)}(X)$. 
Since the ideal $L^{k+1}(X)$ is homogeneous, 
$L_{(k)}(X)$ is naturally graded
\[ L_{(k)}(X)=\bigoplus_{i=1}^k L_i(X), \]
where, for $i\le k$, we identify $L_i(X)$ with its image in the quotient.
Finally, we denote by $\pi_k=\overline{\phantom{I}}\circ \pi$ 
the composition
\[ \pi_k:A(X)\longrightarrow L(X) \longrightarrow L_{(k)}(X). \]
Sometimes for convenience we will write $[\ ,\ ]$ instead of $\mu$.

\textbf{Hall bases.}\label{subsec:hall}
Let $X=\{x_1,\cdots,x_m\}$.
Since $L_i(X)=\pi(A_i(X))$ and $A_i(X)=\langle M_i(X)\rangle$, 
it follows that
\[ L_i(X)=\langle \pi(M_i(X))\rangle. \]
Linear basis for each $L_i(X)$, and hence for the free Lie algebra 
$L(X)$ and for the $k$-step free nilpotent Lie algebras, 
can be chosen from $\pi(M_i(X))$.
Very well-known bases of this kind are the \emph {Hall bases}.

\begin{enumerate}[(1)]
 
\item 
Starting from an ordered basis $B_1$ of $L_1(X)$ one constructs
recursively ordered basis $B_i$ of $L_i(X)$ as follows:
\begin{enumerate}[(i)]
 \item Let $B_1$ be the set of generators $X$ ordered by
   \[ x_1<x_2<\dots<x_m. \]
   
 \item Given ordered bases $B_1,\dots,B_k$ of $L_1(X),\dots,L_k(X)$
  respectively, $u\in B_i$ and $v\in B_j$, with $i\ne j$, are ordered by
   \[ u<v, \text{\qquad if $i<j$}. \]
   
 \item Now $B_{k+1}$ is formed by all brackets $\lambda(u,v)$, with
  $u\in B_i$, $v\in B_j$ with $i+j=k+1$, subject to the 
  following restrictions:
   \[ u>v \text{\qquad and \qquad if\quad} u=\lambda(w,z),\quad  v\ge z. \]
  The elements of $B_{k+1}$ are ordered lexicographically; that is
  $\lambda(u,v)>\lambda(w,z)$ if $u>w$, or $u=w$ and $v>z$.
   
 \item Given an order for the generators, the basis so constructed 
  is uniquely determined.  
\end{enumerate}

\item Observe that each element $v$ in the basis $B_i$ has a multidegree 
$D_v=(d_1,\dots,d_n)$, where $d_j$ is the number of occurrences of 
$x_j$ in $v$.
Clearly $d_1+\dots+d_n=i$ and moreover the bracket is multigraded,
that is $D_{[v,w]}=D_v+D_w$.

From the construction and the last observation, three facts,
that we recall for later use, follow\color{black}:
\begin{enumerate}[(i)]
 \item The element $c_k=\lambda^{k-1}(x_2,x_1,\dots,x_1)$, of multidegree 
 $D=(k-1,1,0,\dots,0)$ is in $B_k$, for every $k\ge 2$.
 
 \item  Given $v\in B_i$ and $w\in B_j$, $\lambda(v,w)\in L_{i+j}(X)$ is a linear combination of the those elements of $B_{i+j}$ with 
  multidegree equal to $D_v+D_w$. The coefficient of $c_k$, with $k=i+j$, in this linear combination is almost always zero. 
  The only exception is the case $v=c_{k-1}$ and $w=x_1$,
  in which case $\lambda(v,w)=c_k$.
 
 \item In general $\dim L_i(X)\ge 2$, with the only exception of
  $L_2(X)$ if $m=2$, in which case $\dim L_2(X)=1$.
\end{enumerate}

\end{enumerate}

%-----------------------------------------------------------------
\subsection{Main results}

Given $X=\{x_1,...,x_m\}$ let us denote $M(X)$, $A(X)$, $L(X)$ and $L_{(k)}(X)$ by $M(m)$, $A(m)$, $L(m)$ and $L_{(k)}(m)$ respectively. 

\begin{theorem}
The $n$-dimensional free $k$-step nilpotent Lie algebra on $m$ generators $L_{(k)}(m)$, 
is non-rigid in $\N_{n,k+1}$, if it is not isomorphic to
$L_{(2)}(2)$.
\end{theorem}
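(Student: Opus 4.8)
The plan is to exhibit, for every free nilpotent Lie algebra $\g=L_{(k)}(m)$ with $(k,m)\neq(2,2)$ and bracket $\mu$, a linear deformation $\mu_t=\mu+t\varphi$ lying in $\N_{n,k+1}$ which is $(k+1)$-step nilpotent for $t\neq0$. Since $\mu$ is only $k$-step nilpotent, any such $\mu_t$ is not isomorphic to $\mu$, so $\O(\mu)$ fails to be open in $\N_{n,k+1}$, which is exactly non-rigidity there. I will get $\mu_t$ from Corollary~\ref{coro:main-construction} by choosing a codimension-$2$ subalgebra adapted to the multigrading: in addition to $\g=\bigoplus_iL_i(m)$ one has the refinement $L_i(m)=\bigoplus_{|\alpha|=i}L_i(m)_\alpha$ by multidegree, with $[L_i(m)_\alpha,L_j(m)_\beta]\subseteq L_{i+j}(m)_{\alpha+\beta}$. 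Write $x_1,\dots,x_m$ for the generators, let $b_k=[[\cdots[[x_1,x_2],x_2],\cdots],x_2]\in L_k(m)$ be the left-normed bracket with a single $x_1$ and $k-1$ factors $x_2$ (it is nonzero and, since the corresponding multidegree component is one-dimensional, spans $L_k(m)_{(1,k-1,0,\dots,0)}$), and put $L_k(m)'=\bigoplus_{\alpha\neq(1,k-1,0,\dots,0)}L_k(m)_\alpha$. The data I feed to the corollary is
\[
a_1=x_2,\qquad a_2=b_k,\qquad \h=\langle x_1,x_3,\dots,x_m\rangle\oplus L_2(m)\oplus\cdots\oplus L_{k-1}(m)\oplus L_k(m)',
\]
together with any $y\in L_k(m)'\setminus\{0\}$; such a $y$ exists precisely because we excluded $L_{(2)}(2)$, the only case with $\dim L_k(m)=1$ (one may take $y=c_k$ when $k\geq3$). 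As $y\in L_k(m)=Z(\g)$, automatically $y\in Z_\g(\h)$.

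The step I expect to be the main obstacle is checking that $\h$ is a subalgebra (it clearly has codimension $2$, omitting $x_2$ from $L_1(m)$ and the line $\langle b_k\rangle$ from $L_k(m)$, with complement $\langle x_2,b_k\rangle$). Only brackets of homogeneous elements of $\h$ that land in $L_k(m)$ need attention, and it suffices to show none of them has multidegree $(1,k-1,0,\dots,0)$. Such a bracket has factors in $L_i(m)\cap\h$ and $L_{k-i}(m)\cap\h$. If $i\in\{1,k-1\}$, one factor lies in $\langle x_1,x_3,\dots,x_m\rangle$, whose multidegrees all have zero second coordinate; then the other factor (of degree $k-1$) would have to be a pure power of $x_2$, hence $0$ as soon as $k-1\geq2$, or, when $k=2$, equal to $x_2\notin\h$. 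If $2\leq i\leq k-2$, both factors have degree $\geq2$; since $[x_j,x_j]=0$ every nonzero homogeneous element of degree $\geq2$ involving $x_1$ has first coordinate $\geq1$, so two such factors contribute first coordinate $\geq2\neq1$. Hence $[\h,\h]\subseteq\h$, and Corollary~\ref{coro:main-construction} yields the linear deformation $\mu_t=\mu+t\,\varphi$ with $\varphi=x_2^*\wedge b_k^*\otimes y$.

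It remains to pin down the nilpotency class of $\mu_t$. Note $\Im\varphi\subseteq\langle y\rangle\subseteq Z(\g)$ and, since $y\in L_k(m)'\subseteq\h$ is neither $x_2$ nor $b_k$, both $x_2^*(y)=0$ and $b_k^*(y)=0$, so $\varphi(\langle y\rangle,\g)=0$. Expanding $\mu_t^{k+1}$ by multilinearity, each summand replaces each of the $k+1$ brackets in $\mu^{k+1}$ by $\mu$ or $\varphi$: the all-$\mu$ term lies in $L_{\geq k+2}(m)=0$; a term whose outermost bracket is its only $\varphi$ has inner argument a value of $\mu^{k}$ on $k+1$ elements, hence in $L_{\geq k+1}(m)=0$; and a term with a $\varphi$ strictly inside produces an element of $\langle y\rangle$ there, annihilated by every later $\mu$ (centrality) and every later $\varphi$. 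So $\mu_t^{k+1}=0$ and $\mu_t\in\N_{n,k+1}$. Conversely, an easy induction on $j$ gives $\mu_t^{\,j}(x_1,x_2,\dots,x_2)=\mu^{\,j}(x_1,x_2,\dots,x_2)\in L_{j+1}(m)$ for $0\leq j\leq k-1$ — each intermediate value sits in some $L_\ell(m)$ with $\ell\leq k-1$, on which $b_k^*$ vanishes, so bracketing with $x_2$ creates no $\varphi$-term — whence $\mu_t^{\,k-1}(x_1,x_2,\dots,x_2)=b_k$ and
\[
\mu_t^{\,k}(x_1,x_2,\dots,x_2)=\mu(b_k,x_2)+t\,\varphi(b_k,x_2)=0-t\,b_k^*(b_k)\,x_2^*(x_2)\,y=-t\,y\neq0
\]
for $t\neq0$. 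Thus $\mu_t$ is exactly $(k+1)$-step nilpotent, so $\mu_t\in\N_{n,k+1}\setminus\O(\mu)$, proving that $L_{(k)}(m)$ is not rigid in $\N_{n,k+1}$.
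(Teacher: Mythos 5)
Your proof is correct and is essentially the paper's own argument: the same deformation $\mu+t\,(a_1^*\wedge a_2^*\otimes y)$ obtained from Corollary~\ref{coro:main-construction}, with $a_1$ a generator, $a_2$ the length-$k$ chain on two generators spanning a one-dimensional multidegree component, $\h$ the complementary span, and $y$ a second independent element of the top layer $L_k(m)$ (whose existence is exactly why $L_{(2)}(2)$ is excluded). The only differences are presentational: you verify that $\h$ is a subalgebra by multidegree bookkeeping where the paper invokes the combinatorial properties of the Hall basis, and you check $\mu_t^{k+1}=0$ and the exact nilpotency class explicitly where the paper states them with less detail.
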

\begin{proof}
We construct a $(k+1)$-step linear deformation of $L_{(k)}(m)$ 
using Corollary \ref{coro:main-construction}.

Let $B$ be the Hall basis (given in the previous section) associated to the set of generators 
$\{x_1,...,x_m\}$ ordered by $x_1<\dots<x_m$. 
Let
\[ a_1=x_1 \qquad\text{and}\qquad 
   a_2=c_k=[[\dots[[x_2,x_1],x_1],\dots],x_1],
\] 
and $\h=\langle B-\{a_1,a_2\}\rangle$. The subspace $\h$ is in fact a subalgebra, as it follows from the item (2)(ii) of the previous facts.
Also $B_k$ has at least 2 elements, as it follows from the item (2)(iii) and
the fact that $L_{(k)}(m)\not\simeq L_{(2)}(2)$.
Then there exists $y\in B_k$ linearly independent with $a_2\in B_k$. Since $B_k\subseteq Z(L_{(k)}(m))$, 
then $y\in Z_{L_{(k)}(m)}(\h)$\color{black}. 
Hence, by Corollary \ref{coro:main-construction}, we can consider
the linear deformation of $L_{(k)}(m)$ given by 
\[ [\ ,\ ]_t=[\ ,\ ] + t(a_1\wedge a_2\otimes y). \]
If $t\ne 0$, $[a_1,a_2]_t=ty\ne 0$, so that 
$L_t^{k+1}=\langle y\rangle \ne 0$
and therefore $L_t$ is a $(k+1)$-step nilpotent Lie algebra for all $t\ne 0$. \end{proof}

The following fact is needed in our proof of the next theorem.
\begin{remark}\label{rmk:chains}
Given $x_{i_1},\dots,x_{i_k}$ in $X$, consider the element 
\[
 \lambda^{k-1}(x_{i_1},\dots,x_{i_k})=[[\dots[[x_{i_1}, x_{i_2}], x_{i_3}]\dots], x_{i_k}]
\]
in $L_k(m)$.
The set formed by these elements will be denoted by $\lambda^{k-1}(X^k)$. 
In general $\lambda^{k-1}(X^k)$ is not a linearly independent set, but it generates the homogeneous component $L_k(m)$. 
This is clear for $L_1(m)$ and $L_2(m)$ and follows inductively for $L_k(m)$, 
by the Jacobi identity.
\end{remark}

\begin{theorem}
The free $k$-step nilpotent Lie algebra $L_{(k)}(m)$ is rigid in $\N_{n,k}$.
\end{theorem}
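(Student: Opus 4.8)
The plan is to show that $H^2_{k\text{-nil}}(\mu,\mu) = 0$ for $\mu$ the bracket of $L_{(k)}(m)$, and then invoke Corollary \ref{coro:k-rig} from the appendix to conclude $k$-rigidity. So I must show that every $\omega \in \Lambda^2({\K^n}^*)$ with $\delta\omega = 0$ and $\eta_k(\omega) = 0$ lies in $\Im(\delta^1)$, i.e.\ is a coboundary. Throughout I will exploit the multigrading on $L = L_{(k)}(m) = \bigoplus_{i=1}^k L_i$ coming from the Hall basis: the bracket $\mu$ is homogeneous of degree $-0$ in the $\NN$-grading (it maps $L_i \otimes L_j \to L_{i+j}$, reading $L_{>k}$ as $0$), so the coboundary operators $\delta$, $\delta^1$ and the map $\eta_k$ all respect the induced grading on cochains. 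Hence it suffices to treat each homogeneous piece $\omega$ separately, where $\omega$ has bidegree $(p,q)$ meaning $\omega\colon L_p \wedge L_q \to L_{p+q}$ (and is zero on other components).

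\textbf{Step 1: Reduce via the constraint $\eta_k(\omega)=0$.} The operator $\eta_k(\omega) = \sum_{j=0}^{k-1}\mu^{k-1-j}\circ\omega\circ\mu^j$ is the obstruction to $\mu+t\omega$ staying $k$-step nilpotent to first order. I expect that for a homogeneous $\omega$ of bidegree $(p,q)$, the vanishing of $\eta_k(\omega)$ forces $\omega$ to take values in low-degree components, or more precisely that $\omega$ "evaluated along a chain of length $k$" must vanish. Using Remark \ref{rmk:chains}, every element of $L_i$ is a sum of left-normed brackets $\lambda^{i-1}(x_{j_1},\dots,x_{j_i})$, so a cocycle is determined by its values on such chains, and $\eta_k(\omega)$ controls exactly the terms that would push a length-$k$ chain up to length $k+1$. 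The key point to extract here is that the only "new" component a deformation could create is $L_{k+1}$-valued and spanned (à la fact (2)(ii)) by the chain class $c_{k+1}$; $\eta_k(\omega)=0$ kills precisely this.

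\textbf{Step 2: Killing the cocycle by a coboundary, degree by degree.} Given a homogeneous $\delta$-cocycle $\omega$ of bidegree $(p,q)$ with $p+q \le k$, I want to produce $f \in \Lambda^1({\K^n}^*)$, homogeneous (so $f\colon L_{p+q} \to L_{p+q}$, or more generally degree-preserving), with $\delta^1 f = \omega$. This is where I would run the standard free-Lie-algebra argument: because $L = L_{(k)}(m)$ is free nilpotent, $H^2(L,L)$ — ordinary Chevalley–Eilenberg cohomology — is concentrated in the top degree (deformations can only change the degree-$(k+1)$ part of brackets, which are quotiented out), and modulo that top-degree obstruction every $2$-cocycle is a coboundary. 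Concretely, $\omega$ being a $2$-cocycle on the free object means $\omega(\mu(x,y),z) + \text{cyc} = -(\mu(\omega(x,y),z) + \text{cyc})$; I would define $f$ on generators by matching $\omega$ on pairs $(x_i, \cdot)$ appropriately and then check, using the Jacobi/cocycle identity and induction on the length of Hall basis elements, that $\delta^1 f$ agrees with $\omega$ on all of $L$. The freeness is what makes the inductive extension of $f$ well-defined: there are no relations among the generators to obstruct it below degree $k+1$, and the only potential obstruction — a genuine cohomology class of degree $k+1$, detected by the coefficient of $c_{k+1}$ — has been removed by the hypothesis $\eta_k(\omega)=0$ in Step 1.

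\textbf{Step 3: Assemble and conclude.} Combining Steps 1 and 2: every $\omega \in Z^2_{N_{n,k}}(\mu,\mu) = \Ker(\delta)\cap\Ker(\eta_k)$ is, homogeneous-piece by homogeneous-piece, of the form $\delta^1 f$, hence $Z^2_{N_{n,k}}(\mu,\mu) = B^2(\mu,\mu)$ and $H^2_{k\text{-nil}}(\mu,\mu) = 0$. By Corollary \ref{coro:k-rig}, $L_{(k)}(m)$ is rigid in $\N_{n,k}$, which is the claim. \textbf{The main obstacle} I anticipate is Step 2: carefully constructing the primitive $f$ and verifying $\delta^1 f = \omega$ on all Hall basis brackets requires a clean induction on degree, and one must be scrupulous that the induction never needs to "lift" into degree $k+1$ — that is exactly the point where the hypothesis $\eta_k(\omega)=0$ (as opposed to plain $\delta\omega=0$, which would give ordinary rigidity in $\L_n$, false here) is indispensable. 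A secondary subtlety is making the grading argument airtight, i.e.\ checking that $\eta_k$ genuinely preserves the cochain grading so that the reduction to homogeneous $\omega$ is legitimate.
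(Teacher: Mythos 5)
Your overall strategy --- reduce to showing $H^2_{k\textrm{-nil}}(\mu,\mu)=0$ via Corollary \ref{coro:k-rig}, then exploit the freeness of $L_{(k)}(m)$ to produce a primitive $f$ of any cocycle $\omega\in\Ker(\delta)\cap\Ker(\eta_k)$, with the condition $\eta_k(\omega)=0$ responsible for killing the one genuine obstruction ``in degree $k+1$'' --- is exactly the strategy of the paper. But as written there is a genuine gap: the two steps that constitute the actual proof are announced rather than carried out. In Step 1 you only say you ``expect'' that $\eta_k(\omega)=0$ forces the relevant vanishing, and in Step 2 you say you ``would run the standard free-Lie-algebra argument'' and ``would define $f$ on generators by matching $\omega$ on pairs $(x_i,\cdot)$ appropriately''; no such $f$ is exhibited, and the assertion that the ordinary cohomology $H^2(L,L)$ of a free nilpotent Lie algebra is concentrated in top degree is itself a nontrivial claim, essentially equivalent to what has to be proved, which cannot be quoted as standard without an argument.

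The missing content is precisely the paper's construction. One sets $f=0$ on the generators (not by ``matching $\omega$'' there) and defines $f$ recursively on the free magma by
\[ f(p\centerdot q)=[f(p),\pi_k(q)]+[\pi_k(p),f(q)]-\omega(\pi_k(p),\pi_k(q)); \]
the antisymmetry of $\omega$ and the cocycle identity $\delta\omega=0$ are exactly what make $f$ descend to the free Lie algebra $L(m)$, and this must be checked. The heart of the proof is then the identity, established by induction along chains,
\[ f\circ\lambda^{i}=-\sum_{j=0}^{i-1}\mu^{i-1-j}\circ\omega\circ\mu^{j}, \]
which, combined with $\eta_k(\omega)=\sum_{j=0}^{k-1}\mu^{k-1-j}\circ\omega\circ\mu^{j}=0$ and $\mu^{j}=0$ for $j\ge k$, yields $f(L^{k+1}(m))=0$ because the chains $\lambda^{i}(X^{i+1})$ span $L_{i+1}(m)$ (Remark \ref{rmk:chains}). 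Only then does $f$ induce a linear map of $L_{(k)}(m)$ with $\delta^1 f=\omega$. This identity is the precise form of your intuition that ``$\eta_k(\omega)=0$ kills exactly the coefficient of $c_{k+1}$''; without it, or an equivalent computation, the argument is incomplete. The grading reduction you propose is legitimate but is neither necessary for this construction nor a substitute for it.
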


\begin{proof}
According to Corollary \ref{coro:k-rig} (see the Appendix), it suffices to prove that $H_{k\textrm{-nil}}^2(\mu,\mu)=0$. 
Given $\sigma\in Z_{k\textrm{-nil}}^2(\mu,\mu)$\color{black}, we construct a linear function 
$f:L_{(k)}(m)\rightarrow L_{(k)}(m)$ such that $\sigma=\delta f$, so that $\sigma\in B_{k\textrm{-nil}}^2(\mu,\mu)$.

We start by considering the linear function $f:A(m)\rightarrow L_{(k)}(m)$
defined in the basis $M(m)$ of $A(m)$ recursively by

\begin{gather*}
f(x)=0, \text{\ for all\ } x\in X=M_1(m)\\   
f(p\mp q)=[f(p),\pi_k(q)]+[\pi_k(p),f(q)]-\s(\pi_k(p),\pi_k(q)).
\end{gather*}

A direct calculation shows that $f$ satisfies, for all $a,b,c\in A(m)$,
\begin{gather*}
f(a\mp b+b\mp a)=0, \\
f(\sc \mp^2 (a,b,c))=0.
\end{gather*}
So that it induces a linear map
$f:L(m)=A(m)/I\rightarrow L_{(k)}(m)$.
This map satisfies, for $u,v\in L(m)$, 
\begin{equation}\label{eqn:f}
  f(\lambda(u,v))=[f(u),\bar{v}]+[\bar{u},f(v)]-\s(\bar{u},\bar{v}).
\end{equation}
Recall that $\bar{\ }:L(m)\rightarrow L_{(k)}(m)$ is the quotient projection.	Moreover, we will prove that $f(L(m)^{k+1})=0$ and therefore it induces 
a linear map $f:L_{(k)}(m)\rightarrow L_{(k)}(m)$ satisfying,
for all $u,v\in L_{(k)}(m)$,
\[ f([u,v])=[f(u),v]+[u,f(v)]-\s(u,v). \]
And thus $\s=\delta f$.

It remains to see that $f(L^{k+1}(m))=0$. 
To prove this, it suffices to show that $f\circ\l^i(X^{i+1})=0$, for all $i\geq k$, because $\lambda^i(X^{i+1})$ generates $L^{i+1}(m)$
(see \ref{rmk:chains}).

That $f\circ\l^i(X^{i+1})=0$, for all $i\geq k$ is a consequence of the 
following identity of $(i+1)$-multilinear functions in $X^{i+1}$:
\begin{equation}\label{eqn:f-lambda}
 f\circ\l^{i}=
-\sum_{j=0}^{i-1}{\mu}^{i-1-j}\circ\s\circ\mu^{j},
\end{equation}
where $\mu$ is the Lie bracket of $L_{(k)}(m)$. 

We prove this identity by induction.
If $i=1$ and $(x,y)\in X^2$, since $\overline{x}=x$, $\overline{y}=y$, and $f(x)=0=f(y)$, by using \eqref{eqn:f} we have that
\begin{eqnarray*}
 f\circ\lambda^1(x,y) &=& f(\lambda(x,y)) \\
     &=& [f(x),y]+[x,f(y)] -\s(x,y) \\
     &=& -\sum_{j=0}^{0}\mu^{0-j}\circ\s\circ\mu^{j}(x,y).
\end{eqnarray*}
If $f\circ\l^{i}=-\sum_{j=0}^{i-1}\mu^{i-1-j}\circ\s\circ\mu^{j}$ and 
$\x=(\x',x)\in X^{i+2}$, for some $\x'\in X^{i+1}$ and $x\in X$,
we have that
\begin{eqnarray*}
f\circ\l^{i+1}(\x) &=& f(\lambda(\l^{i}(\x'),x)) \\
  &=& [f(\l^{i}(\x')),\overline{x}]+[\overline{\l^{i}(\x')},f(x)]-
       \s(\overline{\l^{i}(\x')},\overline{x}) \\
  &=& [f\circ\l^{i}(\x'),x]+\s(\mu^{i}(\x'),x);
\end{eqnarray*}
the second identity follows from \eqref{eqn:f} and 
the third one follows because $\overline{x}=x$, 
$\overline{\l^i(\x')}=\mu^i(\overline{\x'})=\mu^i(\x')$ and $f(x)=0$.
Now by the inductive hypothesis, we have that
\begin{eqnarray*}
f\circ\l^{i+1}(\x)
  &=& \big[-\sum_{j=0}^{i-1}\mu^{i-1-j}\circ\s\circ\mu^{j}(\x'),x\big]-   
      \s(\mu^{i}(\x'),x) \\
  &=& -\sum_{j=0}^{i-1}\mu^{i-j}\circ\s\circ\mu^{j}(\x)-\mu^{i-i}\circ 
     \s\circ  \mu^{i}(\x) \\
  &=& -\sum_{j=0}^{i}\mu^{i-j}\circ\s\circ\mu^{j}(\x).
\end{eqnarray*}     
Finally, we show that $f\circ \lambda^i=0$, for all $i\ge k$.
If $i\geq k$, then $i=k+c$, for some $c\geq0$. 
So that
\begin{align*}
f\circ\l^{i}&=-\sum_{j=0}^{i-1}\mu^{i-1-j}\circ\s\circ\mu^{j}\\
&=-\sum_{j=0}^{k+c-1}\mu^{k+c-1-j}\circ\s\circ\mu^{j}\\
&=-\sum_{j=0}^{k-1}\mu^{k+c-1-j}\circ\s\circ\mu^{j}-\sum_{j=k}^{k+c-1}\mu^{k+c-1-j}\circ\s\circ\mu^{j}\\
&=-\mu^{c}\circ(\sum_{j=0}^{k-1}\mu^{k-1-j}\circ\s\circ\mu^j)-\sum_{j=k}^{k+c-1}\mu^{k+c-1-j}\circ\s\circ\mu^{j}\\
&=0.
\end{align*}
The last identity holds because both terms are zero. 
The first one is zero, because $\s\in Z_{k\textrm{-nil}}^2(\mu,\mu)$ and hence $\sum_{j=0}^{k-1}\mu^{k-1-j}\circ\s\circ\mu^j=0$. 
The second one is equal to zero, because $\mu$ is $k$-step nilpotent Lie algebra and hence $\mu^j=0$, for all $j\geq k$.
\end{proof}

%==============================================
\section{Heisenberg Lie algebras}
%==============================================

The $(2m+1)$-dimensional Heisenberg Lie algebra is 
$\h_m=V\oplus Z=\langle x_1,y_1,\dots,x_{m},y_{m}\rangle\oplus\langle z\rangle$, where the non-zero brackets of basis elements are 
\[\mu(x_{i},y_{i})=z, \text{\ for all } 1\leq i\leq m.
\]

$\h_m$ is a $2$-step nilpotent Lie algebra with center $Z$.  Sometimes for convenience we will write $[\ ,\ ]$ instead of $\mu$ and $n$ instead of $2m+1$.

Let $\{x_1^*,y_1^*,\dots,x_m^*,y_m^*,z^*\}$ be the dual basis 
of the given one and for any $x\in\h_m$, denote by $x_V$ 
the unique element in $V$ such that $x=x_V+z^*(x)z$.

\begin{remark}\label{rmk:xv}
Given $x,y\in\h_m$, $\ad_x=\ad_y$ if and only if $x_V=y_V$
or equivalently if there is $v\in V$ such that $x=v+z^*(x)z$ and 
$y=v+z^*(y)z$.
\end{remark}

\begin{theorem}
The $2m+1$-dimensional Heisenberg Lie algebra $\h_m$ is non-rigid in $\N_{2m+1,3}$, for all $m>1$.
\end{theorem}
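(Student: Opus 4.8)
The plan is to produce, via Corollary \ref{coro:main-construction}, an explicit non-trivial linear deformation $\mu_t$ of $\h_m$ which is genuinely $3$-step nilpotent; it then lies in $\N_{2m+1,3}$ but is not isomorphic to the $2$-step algebra $\h_m$. The delicate point is the choice of the codimension-$2$ subalgebra: one must create the new bracket between two ``null directions'' of $\h_m$ rather than perturbing one of the structural brackets $\mu(x_i,y_i)=z$, since perturbing the latter typically destroys nilpotency.

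Concretely, I would set $a_1=y_1$ and $a_2=y_2$ (this is where the hypothesis $m>1$ is used: we need two independent Heisenberg pairs), and take $\h=\langle x_1,\dots,x_m,y_3,\dots,y_m,z\rangle$, so that $\g=\langle a_1,a_2\rangle\oplus\h$. First, $\h$ is a subalgebra of codimension $2$: since $[\h,\h]\subseteq\langle z\rangle\subseteq\h$ this is immediate. Second, $x_1\in Z_\g(\h)$, because $[x_1,h]=0$ for every basis vector $h$ of $\h$ (none of them is $y_1$). Thus Corollary \ref{coro:main-construction} applies with $y=x_1$ and yields the linear deformation
\[ \mu_t=\mu+t\,(y_1^*\wedge y_2^*\otimes x_1), \]
whose only bracket on basis vectors differing from that of $\mu$ is $\mu_t(y_1,y_2)=tx_1$.

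It then remains to check that $\mu_t$ is exactly $3$-step nilpotent for $t\neq0$. On the one hand $\mu_t^2\neq0$, since $\mu_t^2(y_1,y_2,y_1)=\mu_t(\mu_t(y_1,y_2),y_1)=\mu_t(tx_1,y_1)=tz\neq0$. On the other hand $\mu_t^3=0$: for all $a,b\in\g$ one has $\mu_t(a,b)\in\langle z,x_1\rangle$, while $z$ is still central for $\mu_t$ and $\mu_t(x_1,\g)\subseteq\langle z\rangle$, so $\mu_t^2(a,b,c)\in\langle z\rangle$ for all $a,b,c\in\g$, and hence $\mu_t^3(a,b,c,d)\in\mu_t(\langle z\rangle,\g)=0$ for all $a,b,c,d\in\g$. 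Therefore $\mu_t\in\N_{2m+1,3}$ for all $t$, and $\mu_t$ is not isomorphic to $\h_m$ for $t\neq0$ because $\h_m$ is $2$-step nilpotent. Since $t\mapsto\mu_t$ is a polynomial curve in $\N_{2m+1,3}$ with $\mu_0=\mu$, every Zariski neighborhood of $\h_m$ contains some $\mu_t$ with $t\neq0$ (the preimage in $\K$ of a non-empty Zariski-open subset of $\N_{2m+1,3}$ is cofinite), so $\O(\h_m)$ is not open and $\h_m$ is non-rigid in $\N_{2m+1,3}$.

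The main obstacle is locating this configuration $(\h,a_1,a_2,y)$. The most obvious attempt — taking $\h=\langle x_2,y_2,\dots,x_m,y_m,z\rangle$ and deforming $\mu(x_1,y_1)$ — forces $y$ into $\langle x_1,y_1,z\rangle$, and one checks that a non-central choice of $y$ breaks nilpotency (the relevant adjoint operator acquires a non-zero eigenvalue) while $y\in\langle z\rangle$ merely rescales the center and gives an algebra again isomorphic to $\h_m$. The choice above is, in effect, the minimal modification which makes the new commutator $x_1$ a nilpotent direction for every adjoint operator; the rest is the routine verification indicated above.
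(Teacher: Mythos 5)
Your proof is correct and follows essentially the same route as the paper: both apply Corollary \ref{coro:main-construction} with a codimension-2 subalgebra built from two distinct Heisenberg pairs (the paper takes $a_1=x_1$, $a_2=x_2$, $y=y_1$, which is your choice up to the symmetry $x_i\leftrightarrow y_i$) to obtain a linear deformation that is exactly $3$-step nilpotent for $t\neq 0$. Your write-up merely makes explicit the verifications (subalgebra, centralizer, nilpotency, non-triviality) that the paper leaves to the reader.
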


\begin{proof}
We will give a $3$-step linear deformation of $\h_m$ using Corollary \ref{coro:main-construction}.

Let $a_1=x_1,\, a_2=x_2,\, \h=\langle B-\{x_1,x_2\}\rangle$ and $y=y_1$. Hence, we can consider the linear deformation of $\h_m$ 
\[ [\ ,\ ]_t=[\ ,\ ] + t(a_1\wedge a_2\otimes y), \]
which is $3$-step nilpotent, for all $t\ne 0$.
\end{proof}

\begin{theorem}
The $2m+1$-dimensional Heisenberg Lie algebra $\h_m$ is rigid in $\N_{2m+1,2}$, 
for all $m\in\NN$.
\end{theorem}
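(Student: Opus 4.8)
The plan is to show that $H^2_{2\textrm{-nil}}(\mu,\mu)=0$ for $\mu$ the bracket of $\h_m$ and then invoke Corollary \ref{coro:k-rig} exactly as in the proof that $L_{(k)}(m)$ is $k$-rigid. So, given a cocycle $\sigma\in Z^2_{2\textrm{-nil}}(\mu,\mu)$, the goal is to build a linear map $f:\h_m\to\h_m$ with $\sigma=\delta f$. Recall that $\sigma\in Z^2_{2\textrm{-nil}}(\mu,\mu)$ means $\sigma$ is a Chevalley--Eilenberg $2$-cocycle (so $\delta\sigma=0$) and in addition $\eta_2(\sigma)=\mu\circ\sigma+\sigma\circ\mu=0$; the latter is the condition that makes the linear deformation $\mu+t\sigma$ stay $2$-step nilpotent, and it is precisely what will force $\sigma$ to be a coboundary.

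First I would unwind $\eta_2(\sigma)=0$: for all $x,y,z$ this says $\mu(\sigma(x,y),z)+\sigma(\mu(x,y),z)=0$. Since $\mu(x,y)=z^*$-component only, i.e.\ $\mu(x,y)\in Z$ and $Z$ is central, the term $\mu(\sigma(x,y),z)$ only sees the $V$-part of $\sigma(x,y)$, while $\sigma(\mu(x,y),z)$ only involves evaluating $\sigma$ on $Z\times\h_m$. Writing $\sigma(x,y)=\sigma_V(x,y)+\sigma_Z(x,y)z$ and splitting accordingly, the identity $\eta_2(\sigma)=0$ will tell us both that $\sigma(Z,\h_m)$ is constrained and that $\mu(\sigma_V(x,y),z)=-\sigma(\mu(x,y),z)$; in particular iterating, $\mu(\sigma_V(x,y),w)$ can be re-expressed through $\sigma$ evaluated on $Z$. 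The CE-cocycle condition $\delta\sigma=0$, written out on triples and using that $\mu$ kills anything involving $Z$, reduces to $\sum_{\circlearrowleft}\sigma(\mu(x,y),z)=0$ plus $\sum_{\circlearrowleft}\mu(\sigma(x,y),z)=0$, i.e.\ $\sum_{\circlearrowleft}\mu(\sigma_V(x,y),z)=0$.

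With these relations in hand I would define $f$ by first choosing its values on a basis adapted to $V\oplus Z$. The natural ansatz, mirroring the free-nilpotent proof, is to set $f$ on generators so that $\delta f$ absorbs the ``diagonal'' part of $\sigma$ coming from $\sigma_Z$ on $V\times V$, and then check that the leftover is forced to vanish by the two constraints above. Concretely: since $\mu(x_i,y_i)=z$ spans $Z$, I can pick $f(z)$ and $f(x_i),f(y_i)$ to match the values $\sigma$ takes on the pairs $(x_i,y_i)$ and on $(\,\cdot\,,z)$, then verify that the multilinear identity $\delta f=\sigma$ holds on all remaining pairs because $\eta_2(\sigma)=0$ and $\delta\sigma=0$ leave no further freedom. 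This is the same mechanism as in the $L_{(k)}(m)$ case, where the nil-cocycle condition collapsed the inductive sum $f\circ\lambda^i=-\sum\mu^{i-1-j}\circ\sigma\circ\mu^j$ to zero; here the analogous collapse happens at length $2$.

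The main obstacle I expect is the bookkeeping in showing that the $f$ defined on generators actually satisfies $\sigma=\delta f$ on \emph{all} pairs, not just the chosen basis pairs — in other words, that the constraints $\delta\sigma=0$ and $\eta_2(\sigma)=0$ really are sufficient, with no residual obstruction class surviving. A clean way to organize this is to prove it is enough to check the identity $\sigma=\delta f$ on pairs $(v,w)$ with $v,w\in V$ and on pairs $(v,z)$ with $v\in V$ (since $Z$ is $1$-dimensional and central, everything reduces to these), then dispatch the $V\times V$ case using the cocycle identity and the $V\times Z$ case using $\eta_2(\sigma)=0$. I would also need to double check the low-rank subtlety: the argument must work for every $m\in\NN$, including $m=1$, which here is genuinely rigid (unlike $L_{(2)}(2)$'s non-$3$-rigidity), so the proof should not secretly use $\dim V\ge 4$; the $2$-step cohomology vanishing is what makes $m=1$ go through.
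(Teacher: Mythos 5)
Your proposal follows essentially the same route as the paper's proof: reduce to showing $H^2_{2\textrm{-nil}}(\mu,\mu)=0$ via Corollary \ref{coro:k-rig}, exploit $\eta_2(\sigma)=0$ (i.e.\ $\mu\circ\sigma=-\sigma\circ\mu$) to force the $V$-components of the $\sigma(x_i,y_i)$ to coincide, to determine $\sigma(z,\cdot)$, and to show $\sigma(x,y)\in Z$ whenever $[x,y]=0$, then define $f(z)$ accordingly and solve a manifestly consistent linear system for $f|_V$. The one incidental inaccuracy — asserting that $\delta\sigma=0$ splits into the two cyclic sums vanishing separately — is not load-bearing, since (as in the paper) only the consequences of $\eta_2(\sigma)=0$ are actually needed.
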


\begin{proof}
According to Corollary \ref{coro:k-rig}, it is enough to prove that $H_{2\textrm{-nil}}^2(\mu,\mu)=0$. Given $\sigma\in Z_{2\textrm{-nil}}^2(\mu,\mu)$, we will prove that there exist a linear function $f:\h_m\rightarrow \h_m$, such that 
$\sigma=\delta f$.

Since $[\ ,\ ]\circ\s=-\s\circ[\ ,\ ]$,  on the one hand
\begin{equation}\label{eqn:sigma}
 \ad_{\s(x_i,y_i)}=-\s(z,\cdot)=\ad_{\s(x_j,y_j)},
\end{equation}
 for all $i,j\in\{1,...,m\}$,
and hence there exists $v\in V$ such that 
$$\s(x_i,y_i)=v+z^*(\s(x_i,y_i))z,$$
for all $i=1,...,m$ (see Remark \ref{rmk:xv}).
On the other hand
\begin{equation}
\text{if } [x,y]=0, \text{then } \s(x,y)\in Z.
\end{equation} 
Now, $\delta f=\s$ if and only if $f$ satisfies the following system of linear equations:
\begin{alignat*}{3}\label{}
&\s(x_{i},y_{i}) &=& [f(x_{i}),y_{i}]+[x_{i},f(y_{i})]-f(z), 
                &\quad&\text{for $i=1,\dots,m$}; \\
&\s(x_i,y_j) &=& [f(x_i),y_j]+[x_i,f(y_j)],  
                &&\text{for $1\le i,j\le m, i\ne j$}; \\
&\s(x_{i},x_{j}) &=& [f(x_{i}),x_{j}]+[x_{i},f(x_{j})], 
                && \text{for $1\le i<j\le m$ }; \\
&\s(y_{i},y_{j}) &=& [f(y_{i}),y_{j}]+[y_{i},f(y_{j})], 
&& \text{for $1\le i<j\le m$ }; \\
&\s(z,x_{i}) &=& [f(z),x_{i}], 
                && \text{for $i=1,\dots,m$ }; \\
&\s(z,y_i) &=& [f(z),y_i], 
                && \text{for $i=1,\dots,m$}.
\end{alignat*}
We start by defining $f(z)=-v$.
Then the last two sets of equations are satisfied
because $\sigma$ satisfies \eqref{eqn:sigma}. 

To define $f$ in $V$, we set that
$z^*(f(v))=0$, for all $v\in V$, so that
\begin{eqnarray*}
f(x_i) &=& \sum_{k=1}^{m} x_k^*(f(x_i))x_k+\sum_{k=1}^{m} y_k^*(f(x_i))y_k, \\
f(y_i) &=& \sum_{k=1}^{m} x_k^*(f(y_i))x_k+\sum_{k=1}^{m} y_k^*(f(y_i))y_k.
\end{eqnarray*}
It remains to determine 
$a_k^i=x_k^*(f(x_i))$, $b_k^i=y_k^*(f(x_i))$, $c_k^i=x_k^*(f(y_i))$ and $d_k^i=y_k^*(f(y_i))$, for $k,i\in \{1,\dots,m\}$, which now must satisfy
the system of linear equations
\begin{alignat*}{3}
\s(x_{i},y_{i})-v &= (a_i^i+d_i^i)z, 
                        &\qquad&\text{for $i=1,\dots,m$};\\
\s(x_i,y_j) &= (a_j^i+d_i^j)z,
                        &&\text{for $1\le i,j\le m, i\ne j$}; \\
\s(x_{i},x_{j}) &= (-b_j^i+b_i^j)z, 
                        && \text{for $1\le i,j\le m$ }; \\
\s(y_{i},y_{j}) &= (c_j^i-c_i^j)z, 
                        && \text{for $1\le i,j\le m$ }. \\
\end{alignat*}
Since this one is clearly consistent, we are done.
\end{proof}

\begin{remark}
 It is worth mentioning that the same result can be found in \cite{GR2}.
\end{remark}

%========================================================
\section{Lie algebras with an abelian factor}
%========================================================

In this section, we explore the rigidity and non-rigidity of Lie algebras with an abelian factor.

Given a Lie algebra $(\g,\lambda)$ and an abelian Lie algebra $(\a,\mu)$,
the question we address in this section is whether is
$\g\oplus\a$ rigid or not ($\g\oplus\a$ with the Lie bracket given by the direct sum of Lie algebras).
Denote by $\lambda\oplus\mu$ the bracket of $\g\oplus\a$.

The answer, which depends on the size of $\a$ and the framework variety,
is in general no.
However the situation for small abelian factors, in particular for one
dimensional factors are quite interesting.

It is worth recalling that for Lie algebras with an abelian ideal or subalgebra that is not an abelian factor, the situation is different.
In fact in \cite{R} rigid Lie algebras of the form $\g\ltimes\a$ have been constructed
and in \cite{AG} rigid solvable Lie algebras of the form $\g\rtimes\a$ have been constructed.

Given an $m$-dimensional Lie algebra $(\g,\lambda)$ and the $l$-dimensional abelian Lie algebra, denoted by $(\a_l,\mu)$,
we observe that for any non-abelian Lie bracket $\nu$, \begin{equation}\label{eq:def-ab}
\mu_t=t\nu
\end{equation}
is a non-trivial linear deformation of $\mu$. This
gives rise \color{black}to the non-trivial deformation
\[ (\lambda\oplus\mu)_t=\lambda\oplus t\nu, \]
of the $m+l$-dimensional Lie algebra $\lambda\oplus\mu$. 
From this observation it follows that:
\begin{enumerate}[(1)]
 \item If $l\ge 2$, then for any $\g$, $\g\oplus\a_l$ is non-rigid in $\L_{m+l}$.
 
 \item If $l\ge 2$, then for any $m$-dimensional solvable Lie algebra $\mathfrak{s}$, 
 $\mathfrak{s}\oplus\a_l$ is non-rigid in $\S_{m+l}$.
 Moreover, if $\mathfrak{s}$ is $k$-step solvable, 
 $\mathfrak{s}\oplus\a_l$ is non-rigid in $\S_{m+l,k}$.
 
 \item If $l\ge 3$, then for any $m$-dimensional nilpotent Lie algebra $\n$, $\n\oplus\a_l$ 
 is non-rigid in $\N_{m+l}$.
 Moreover, if $\n$ is $k$-step nilpotent, 
 $\n\oplus\a_l$ is non-rigid in $\N_{m+l,k}$.
\end{enumerate}
Hence, the only cases remaining to consider are:
\begin{enumerate}[(1')]
 \item $\g\oplus\a_l\in\L_{m+l}$, where $\g$ is any Lie algebra and $l=1$.
  
 \item $\mathfrak{s}\oplus\a_l\in\S_{m+l}$, where $\mathfrak{s}$ is any $k$-step solvable Lie algebra
 and $l=1$.
 And its stronger version for $\mathfrak{s}\oplus\a_l\in\S_{m+l,k}$.
  
 \item $\n\oplus\a_l\in\N_{m+l}$, where $\n$ is any $k$-step nilpotent Lie algebra and $l=1$ or
 $l=2$. 
 And its stronger version for $\n\oplus\a_l\in\N_{m+l,k}$.
\end{enumerate}

We will show that the answer for (2') and for (3') is the general one,
namely, they are not rigid, even in their strongest forms.
The answer for (1') turns out to be more intricate.
Even though there is no unified answer independently of $\g$,
we shall see that being $\g$ perfect or not plays a role.
Recall that $\g$ is called \emph{perfect} if $\lambda(\g,\g)=\g$.

\begin{example}\label{ex:semisimple}
Given $\g$ a $m$-dimensional semisimple Lie algebra, let $\overline{\g}=\g\oplus\a_1$. 
Then $H^2(\overline{\g},\overline{\g})=0$ and hence $\overline{\g}$ 
is rigid in $\L_{m+1}$.
The fact that the second adjoint cohomology group of $\overline{\g}$ 
vanishes follows directly from the Hochschild-Serre spectral sequence
associated to the ideal $\g$ of $\overline{\g}$ and the fact that
$H^1(\g,\g)=H^2(\g,\g)=0$.

\end{example}

We address now questions (1'), (2') and (3'), where the abelian
factor is small, one at a time.
For the sake of completeness, the statements are given for arbitrary
abelian factors.

\begin{notation}
	For convenience, we will denote by $[\ ,\ ]$ the bracket of 
	$\g\oplus\a_l$.
\end{notation}

%------------------------------------------------
\subsection{Non-perfect Lie algebras}

\begin{theorem}\label{thm:non-perfect}
If $\g\in\L_{m}$ is non-perfect, then $\g\oplus\a_l$ is non-rigid 
in $\L_{m+l}$, for all $l\in\NN$.
\end{theorem}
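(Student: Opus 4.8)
The plan is to exhibit, for any non-perfect $\g$, an explicit non-trivial linear deformation of $\lambda\oplus\mu$ using the construction of Theorem \ref{thm:main-construction} (or rather a hands-on variant adapted to the presence of the abelian factor). Since $\g$ is non-perfect, $\lambda(\g,\g)\subsetneq\g$, so there is a nonzero linear functional $f\in\g^*$ vanishing on $\lambda(\g,\g)$; fix $0\neq x_0\in\g$ with $f(x_0)\neq 0$ (after rescaling $f$, assume $f(x_0)=1$). Also fix a nonzero vector $e$ spanning one coordinate of the abelian factor $\a_l$ (this is where $l\geq 1$ is used, and the argument is uniform in $l$). The idea is to deform by the $2$-cocycle $\varphi$ that "feeds" the direction $e$ back into $\g$ along the functional $f$, i.e. something of the shape $\varphi = (f\wedge e^*)\otimes x_0$ on $\g\oplus\a_l$, where $e^*$ is the coordinate functional dual to $e$ extended by $0$ on $\g$ and on the other abelian coordinates, and $f$ is extended by $0$ on $\a_l$.

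The key steps, in order, are: (1) Check that $\h := \ker f \cap \g \;\oplus\; \langle\text{the other }l-1\text{ abelian coordinates}\rangle$ (equivalently, a codimension-$2$ subspace complementary to $\langle x_0, e\rangle$) is a subalgebra of $\g\oplus\a_l$: it contains $[\g,\g]$ since $f$ kills $[\g,\g]$, it is closed under the bracket because $\a_l$ is central and $\ker f\cap\g$ absorbs all brackets, so this is routine. (2) Verify the trace/hypothesis condition of Theorem \ref{thm:main-construction}, namely $a_1^*\circ\ad_{a_1} + a_2^*\circ\ad_{a_2}=0$ on $\h$ with $a_1=x_0$, $a_2=e$: since $e$ is central in $\g\oplus\a_l$, $\ad_{a_2}=0$; and $a_1^*\circ\ad_{a_1}(h) = f(\lambda(x_0,h))$ for $h\in\h\cap\g$, which is zero because $\lambda(x_0,h)\in[\g,\g]\subseteq\ker f$, and is trivially zero on abelian coordinates — so the hypothesis holds. (3) Check that $y:=x_0 \in Z_{\g\oplus\a_l}(\h)$? — this is the point that needs care: $x_0$ need not centralize $\h$. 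So instead one should choose $y$ inside the center, or more precisely verify $[y,\h]=0$ for the $y$ actually used; the clean choice is to take $y$ to be any nonzero element of $\a_l$ if $l\geq 2$, but for $l=1$ one cannot, so the cocycle must instead be built so that its image lies where it is guaranteed to be killed. (4) Once $\mu_t = (\lambda\oplus\mu) + t\,(a_1^*\wedge a_2^*\otimes y)$ is shown to be a Lie bracket for all $t$ via Theorem \ref{thm:main-construction}, prove the deformation is non-trivial: for $t\neq 0$ the element $e$ (resp. $a_2$) is no longer a central/abelian direction since $\mu_t(a_1,a_2)=ty\neq 0$, whereas in $\lambda\oplus\mu$ the factor $\a_l$ is a genuine direct summand; an isomorphism invariant (e.g. the dimension of a maximal abelian factor, or simply that $\a_l$ splits off) distinguishes $\mu_t$ from $\lambda\oplus\mu$, so the orbit is not open.

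The main obstacle is step (3): making $y\in Z_{\g\oplus\a_l}(\h)$ when $l=1$, so that there is no spare abelian coordinate to play the role of $y$. The resolution I expect is to be more careful about the roles of $a_1,a_2,y$: take $a_2=e$ (the unique abelian generator, central), take $y$ to be any nonzero element of the center of $\g$ that lies in $\ker f$ — and if the center of $\g$ meets $\ker f$ nontrivially we are done; if instead $\g$ has trivial center, or its center is not contained in $\ker f$, one falls back to the observation that a non-perfect nilpotent-free situation still always yields a derivation of a codimension-$1$ ideal. Concretely, since $f$ kills $[\g,\g]$, the codimension-$1$ subspace $I=\ker f\oplus\a_l$ is an ideal of $\g\oplus\a_l$, and the linear map $D$ on $I$ sending $e\mapsto x_0$ and everything else to $0$ is a derivation of $I$ precisely because $\lambda(x_0,I\cap\g)\subseteq[\g,\g]\subseteq\ker f$ forces the Leibniz terms to land correctly and $e$ is central — so this is exactly a Grunewald–O'Halloran deformation, which is non-trivial for $t\neq 0$ by the same invariant argument. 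I would write the proof via this Grunewald–O'Halloran / Dixmier cocycle $\varphi_D$, which sidesteps the centralizer condition entirely and works uniformly for all $l\in\NN$, and only remark that it also fits the framework of Theorem \ref{thm:main-construction}.
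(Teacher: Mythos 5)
Your overall strategy is the right one, but the deformation you actually end up with does not satisfy the Jacobi identity, and neither of your two fallbacks repairs it. The cocycle $\varphi=(f\wedge e^*)\otimes x_0$ sends the pair $(x_0,e)$ to $x_0$, an element of $\g$ that in general does not centralize $\h$. Computing $\sc(\mu\circ\varphi+\varphi\circ\mu)$ on a triple $(x_0,e,h)$ with $h\in\g$: the only surviving term is $\lambda(\varphi(x_0,e),h)=\lambda(x_0,h)$, so $\mu+t\varphi$ fails Jacobi whenever $x_0$ is not central in $\g$ --- already for the two-dimensional algebra $\lambda(x,y)=y$ with $f=x^*$, $x_0=x$. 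Your first fallback (take $y$ central in $\g$ and in $\ker f$) only applies when $Z(\g)\cap\ker f\neq 0$, which fails in the same example. Your second fallback is not a Grunewald--O'Halloran deformation at all: the map $D$ with $D(e)=x_0$ does not send $I=\ker f\oplus\a_l$ into itself, because $f(x_0)\neq 0$ means $x_0\notin I$; so $D$ is not a derivation of $I$, and unwinding the construction gives back exactly the non-cocycle above. The centralizer condition is not sidestepped --- it reappears as the requirement that $D$ be an endomorphism of $I$ satisfying Leibniz.

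The missing idea is to reverse the roles of source and target: the new bracket value $y$ must lie in $Z_{\g\oplus\a_l}(\h)$, and the one element that is always available there is the abelian generator itself. The paper (treating only the remaining case $l=1$, since $l\geq 2$ is handled by deforming the abelian factor alone) takes $a_1=a$ spanning $\a_1$, $a_2=b$ with $b\notin\lambda(\g,\g)$ --- this is exactly where non-perfectness enters --- $\h$ a complement of $\langle a,b\rangle$ containing $\lambda(\g,\g)$, and $y=a$. Then $\ad_a=0$ and $b^*\circ\ad_b$ vanishes on $\h$ because $\lambda(b,\h)\subseteq\lambda(\g,\g)\subseteq\h$, so Theorem \ref{thm:main-construction} applies, and $[a,b]_t=ta$ enlarges the derived subalgebra by one dimension, which gives non-triviality directly. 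In other words, your functional $f$ and your element $x_0=b$ do play a role, but as the second \emph{input} direction of the wedge, not as the output vector.
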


\begin{proof}
The only remaining case is $l=1$.
We will give a linear deformation of $\g\oplus\a_1$ in $\L_{n+l}$, using Theorem \ref{thm:main-construction}.

Given $A=\{a\}$ any basis of $\a_1$ and $B$ any basis of $\g$, there exists $b\in B$ such that $b\notin[\g,\g]$. 
Take $a_1=a, a_2=b, \h=\langle (A\cup B)-\{a,b\}\rangle$ and $y=a$.  So that, from the Corollary \ref{coro:main-construction}, we can consider
the linear deformation of $\g\oplus\a_1$ 
\[ [\ ,\ ]_t=[\ ,\ ] + t(a_1\wedge a_2\otimes y). \]
This deformation is non-trivial, since the dimension of the commutator corresponding to $t\ne 0$ is larger than the dimension of the original one. 
\end{proof}

%-------------------------------------------------------------------
\subsection{Solvable Lie algebras}

\begin{theorem}
If $\mathfrak{s}\in\S_{m,k}$, then $\mathfrak{s}\oplus\a_l$ is 
non-rigid in $\S_{m+l,k}$, for all $l\in\NN$.
\end{theorem}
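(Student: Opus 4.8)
The plan is to reduce to the single case not already settled and to reuse the construction behind Theorem~\ref{thm:non-perfect}, adding one extra verification. For $l\ge 2$ the statement is precisely observation~(2) of this section, applied with $\nu$ any $2$-step solvable bracket on $\a_l$, so I would only need to treat $l=1$; I would also assume $k\ge 2$ (for $k=1$ the variety $\S_{n,1}$ reduces to a single point) and $\mathfrak{s}\ne 0$. Since a nonzero solvable Lie algebra is never perfect, $[\mathfrak{s},\mathfrak{s}]\subsetneq\mathfrak{s}$, and I would fix a basis of $\g:=\mathfrak{s}\oplus\a_1$, with $\a_1=\langle a\rangle$, of the form $\{a,b\}\cup B_0$ where $b\in\mathfrak{s}\setminus[\mathfrak{s},\mathfrak{s}]$ and $B_0$ contains a basis of $[\mathfrak{s},\mathfrak{s}]$. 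Setting $\h:=\langle B_0\rangle$ we get $[\mathfrak{s},\mathfrak{s}]\subseteq\h$, so $\h$ is a codimension-$2$ subalgebra of $\g$ with $\g=\langle a,b\rangle\oplus\h$, and $a\in Z(\g)\subseteq Z_\g(\h)$.

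Next I would invoke Theorem~\ref{thm:main-construction} with $a_1=a$, $a_2=b$ and $y=a$. Its hypothesis $a_1^*\circ\ad_{a_1}+a_2^*\circ\ad_{a_2}=0$ on $\h$ holds because $\ad_a=0$ ($a$ is central) and, for $h\in\h$, $b^*([b,h])=0$ since $[b,h]\in[\mathfrak{s},\mathfrak{s}]\subseteq\h$ and $b^*$ vanishes on $\h$. This produces the linear deformation
\[ \mu_t=(\lambda\oplus 0)+t\,(a^*\wedge b^*\otimes a), \]
which is non-trivial because $\mu_t(a,b)=ta\ne 0$ for $t\ne0$ whereas $\mu(a,b)=0$; consequently the derived ideal of $(\g,\mu_t)$ has dimension $\dim[\mathfrak{s},\mathfrak{s}]+1$, strictly larger than $\dim[\g,\g]$, so $\mu_t\not\cong\mu=\lambda\oplus 0$ for $t\ne 0$.

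The last — and, I expect, the only delicate — step is to check that $\mu_t\in\S_{m+1,k}$ for all $t$, which is what carries the argument beyond that of Theorem~\ref{thm:non-perfect}. I would compute the derived series of $(\g,\mu_t)$: since $a^*$ vanishes on $\mathfrak{s}$ the perturbation is trivial on $\mathfrak{s}\times\mathfrak{s}$, so $\mu_t=\mu$ there, and together with $\mu_t(a,h)=0$ for $h\in\h$ and $\mu_t(a,b)=ta$ this gives that the derived ideal of $(\g,\mu_t)$ equals $\langle a\rangle\oplus[\mathfrak{s},\mathfrak{s}]$. On $[\mathfrak{s},\mathfrak{s}]\subseteq\h$ both $a^*$ and $b^*$ vanish, hence so do the cocycle $a^*\wedge b^*\otimes a$ and $\ad_a$; therefore, from its second term on, the derived series of $(\g,\mu_t)$ coincides with that of $\mathfrak{s}$. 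As $\mathfrak{s}\in\S_{m,k}$ with $k\ge 2$, that series reaches $0$ within $k+1\ge 3$ steps, so the same holds for $(\g,\mu_t)$, i.e.\ $\mu_t\in\S_{m+1,k}$. Thus $\{\mu_t\}_{t\in\K}$ is a curve in $\S_{m+1,k}$ through $\mu$ all of whose members are non-isomorphic to $\mu$ for $t\ne 0$, which proves non-rigidity. The crux is precisely this derived-series bookkeeping: the single new bracket $\mu_t(a,b)=ta$ is absorbed after two derived steps exactly because $a$ centralizes $[\mathfrak{s},\mathfrak{s}]$, so it cannot raise the solvability index.
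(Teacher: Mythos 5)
Your proof is correct and takes essentially the same route as the paper, which simply reuses the deformation of Theorem \ref{thm:non-perfect} (i.e.\ $a_1=a$, $a_2=b\notin[\mathfrak{s},\mathfrak{s}]$, $y=a$) and asserts without further comment that it is a $k$-step solvable deformation. Your adapted choice of basis (so that $b^*$ vanishes on $[\mathfrak{s},\mathfrak{s}]$, making $\h$ a subalgebra and the hypothesis of Theorem \ref{thm:main-construction} verifiable directly, without the nilpotency assumption of Corollary \ref{coro:main-construction}), your derived-series bookkeeping, and your exclusion of the degenerate case $k=1$ merely supply details the paper leaves implicit.
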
	

\begin{proof}
The only remaining case is $l=1$.
Since $\mathfrak{s}$ is solvable, it is non-perfect. 
The deformation given in the proof of the Theorem \ref{thm:non-perfect} is a $k$-step solvable linear deformation. 
\end{proof}

%------------------------------------------------------------------
\subsection{Nilpotent Lie algebras}

In this section, we complete the proof of the fact that
any $m$-dimensional $k$-step nilpotent Lie algebra plus the $l$-dimensional abelian Lie algebra is not rigid,
even in the smaller subvariety $\N_{m+l,k}$,
by considering the remaining cases where the dimension of
the abelian factor is $l=2$ or $l=1$.
It is worth saying that there is a single exception, 
namely $\h_1\oplus\a_1$.

\begin{remark}
 There are only two, non isomorphic, nilpotent Lie algebras 
 of dimension 4 in $\N_{4,2}$: $\a_4$ and $\h_1\oplus\a_1$.
 Therefore, $\h_1\oplus\a_1$ is rigid in $\N_2$.
 It is worth mentioning that $\h_1\oplus\a_1$ is not rigid in 
 $\N_{4,3}=\N_4$.
\end{remark}

Nilpotent Lie algebras are non-perfect,
so by Theorem \ref{thm:non-perfect}, 
$\n\oplus\a_l$ is non-rigid in $\L_{m+l}$, for all $l\in\NN$. 
But to prove that $\n\oplus\a_l$ is non-rigid in $\N_{m+l,k}$ we must work harder, since the deformation constructed in the proof of that
theorem is not nilpotent.

\medskip

\begin{notation}\label{notation}
We fix some notation for what follows.
The bases for $\a_1$ and $\a_2$ will be $A=\{c_1\}$ and $A=\{c_1,c_2\}$
respectively. 
For $\n\in\N_{m,k}$, we choose $B_k\subseteq...\subseteq B_1$ such that $B_i$ is a basis of $\n^i$, the $i$-th term of the descending central series 
of $\n$, for all $i=1,\dots,k$. 
We denote $B_i-B_{i+1}=\{x_1^i,\dots,x_{n_i}^i\}$, for $i=1,\dots,k-1$. 
Notice that $B=B_1$ is a basis of $\n$ 
and $B\cup A$ is a basis of $\n\oplus\a_l$. 
\end{notation}

\begin{proposition}\label{prop:a2}
 If $\n\in\N_{m,k}$, then
 $\n\oplus\a_2$ is non-rigid in $\N_{m+2,k}$.
\end{proposition}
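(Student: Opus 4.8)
The plan is to apply Corollary \ref{coro:main-construction} to the nilpotent Lie algebra $\n \oplus \a_2$, choosing the codimension-2 subalgebra $\h$ so that the resulting linear deformation $\mu_t = \mu + t(a_1^* \wedge a_2^* \otimes y)$ is genuinely non-trivial in $\N_{m+2,k}$ for $t \ne 0$. First I would fix notation as in Notation \ref{notation}: write $A = \{c_1, c_2\}$ for the basis of $\a_2$, and $B = B_1$ for a basis of $\n$ adapted to the descending central series. The natural choice is $a_1 = c_1$, $a_2 = c_2$, and $\h = \langle B \rangle = \n$ (viewed inside $\n \oplus \a_2$), which has codimension $2$. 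For $y$ we need an element of $Z_{\n \oplus \a_2}(\h) = Z_{\n \oplus \a_2}(\n) = Z(\n) \oplus \a_2$; since $\n$ is nilpotent and nonzero (if $\n = 0$ then $\n \oplus \a_2 = \a_2$ and the statement is covered by case (1) of the earlier enumeration with $l \ge 2$, or one can still deform directly), $Z(\n) \ne 0$, so we may take $0 \ne y \in Z(\n)$ — concretely $y \in B_k$ (the innermost nonzero term of the central series). Then Corollary \ref{coro:main-construction} immediately yields that $\mu_t = \mu + t(c_1^* \wedge c_2^* \otimes y)$ is a linear deformation of $\n \oplus \a_2$.

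Next I would check that $\mu_t$ remains in $\N_{m+2,k}$, i.e.\ that it is nilpotent of step at most $k$. The only new bracket introduced is $\mu_t(c_1, c_2) = ty \in \n$; all other brackets are unchanged. Since $\h = \n$ is an ideal of $\mu_t$ (because $\varphi(\h, \g) = 0$, as noted before Theorem \ref{thm:main-construction}), and $\mu_t$ restricted to $\h$ is just the bracket of $\n$, one sees that the descending central series of $(\n \oplus \a_2, \mu_t)$ satisfies $(\n \oplus \a_2)^2 \subseteq \n^2 + \langle y \rangle \subseteq \n$ and then $(\n \oplus \a_2)^{i} \subseteq \n^{i-1}$ for $i \ge 3$ — a short induction using that $y$ is central in $\n$ shows the bracket of two elements of $\n^{i-1}$ still lands in $\n^i$ and that bracketing with $c_1, c_2$ only ever produces multiples of $y$ at the first step. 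Hence $(\n \oplus \a_2, \mu_t)$ is $k$-step nilpotent for all $t$, so $\mu_t \in \N_{m+2,k}$.

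Finally I would argue non-triviality, i.e.\ that $\mu_t$ is not isomorphic to $\mu_0 = \n \oplus \a_2$ for small $t \ne 0$; this shows the orbit of $\mu_0$ is not open and hence $\n \oplus \a_2$ is non-rigid. The cleanest invariant here is the dimension of the center: in $\n \oplus \a_2$ we have $Z(\n \oplus \a_2) = Z(\n) \oplus \a_2$, whereas for $t \ne 0$ the element $c_2$ is no longer central (since $\mu_t(c_1,c_2) = ty \ne 0$), and $c_1$ is no longer central either; a direct computation shows $\dim Z(\n \oplus \a_2, \mu_t) = \dim Z(\n) + 1 < \dim Z(\n) + 2$ for $t \ne 0$. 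Alternatively, the dimension of the derived subalgebra jumps from $\dim \n^2$ to $\dim \n^2 + 1$ when $t \ne 0$ (as $y \in \n^2$ only if $y$ happens to lie in $B_k$ with $k \ge 2$, in which case one uses the center argument instead; if $k = 1$ then $\n$ is abelian and $y$ is a generator, so the derived algebra grows). Either way the isomorphism invariant distinguishes $\mu_t$ from $\mu_0$, proving the deformation is non-trivial. The main obstacle is simply being careful that the chosen $y$ really produces a deformation that differs from $\mu_0$ as an isomorphism class — one should pick $y$ (e.g.\ in $Z(\n)$, and when convenient in a term of the central series other than the top one) so that one of these numerical invariants visibly changes; the verification that $\mu_t$ stays $k$-step nilpotent is routine given that $\h = \n$ is an ideal and $y$ is central in $\n$.
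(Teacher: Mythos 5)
Your proof is correct and rests on the same tool as the paper's (Corollary \ref{coro:main-construction}), but you instantiate it differently. The paper takes $a_1=c_1$, $a_2=x_1^1\in\n\setminus\n^2$, $\h=\langle (A\cup B)-\{c_1,x_1^1\}\rangle$ and $y=c_2$, so the new bracket $[c_1,x_1^1]_t=tc_2$ pushes the second basis vector of $\a_2$ into the derived algebra and non-triviality is read off from the growth of $\dim[\g,\g]$; you instead take $\h=\n$, $a_1=c_1$, $a_2=c_2$, $y\in Z(\n)$, so the new bracket is $[c_1,c_2]_t=ty$ and non-triviality is read off from the drop of $\dim Z$. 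Both work; your choice makes the verification that $\h$ is a subalgebra and that $y$ centralizes $\h$ completely trivial, at the cost of needing the center (rather than the commutator) as the distinguishing invariant when $y\in\n^2$. Two small corrections to your write-up, neither of which affects the conclusion. First, for $t\ne 0$ the center of $\mu_t$ is exactly $Z(\n)$ (neither $c_1$, $c_2$, nor any nonzero combination of them survives, since bracketing with $c_2$, resp.\ $c_1$, kills both coefficients), so its dimension is $\dim Z(\n)$, not $\dim Z(\n)+1$; it is still strictly smaller than $\dim Z(\n)+2$, which is all you need. Second, your stated bound $(\n\oplus\a_2)_t^{\,i}\subseteq\n^{i-1}$ for $i\ge 3$ only yields $(k+1)$-step nilpotency; to land in $\N_{m+2,k}$ you need the sharper inclusion $(\n\oplus\a_2)_t^{\,i}\subseteq\n^{i}$ for $i\ge 2$, which does follow from your setup because $y\in\n^k\subseteq\n^2$ (for $k\ge 2$) and no new bracket involves elements of $\n$, so $(\n\oplus\a_2)_t^{\,2}\subseteq\n^2+\langle y\rangle=\n^2$ and thereafter the series coincides with that of $\n$.
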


\begin{proof}
We will construct a non-trivial linear deformation of $\n\oplus\a_2$ in $\N_k$
using Corollary \ref{coro:main-construction}. 

Take $a_1=c_1$, $a_2=x_1^1$, $\h=\langle (A\cup B)-\{c_1,x_1^1\}\rangle$ and $y=c_2$.  
So that, we can consider
the linear deformation of $\n\oplus\a_2$ given by
\[ [\ ,\ ]_t=[\ ,\ ] + t(a_1\wedge a_2\otimes y). \]
It is easy to see that this deformation is $k$-step nilpotent. 
It is non-trivial, since the dimension of the commutator corresponding to $t\ne 0$ is larger than the dimension of the original one. 
\end{proof}

We come now to the most difficult case, that for $l=1$. 
We look at the 2-step nilpotent quotient
$\tilde\n=\dfrac{\n}{\n^3}$; recall that $\n^3=\lambda(\lambda(\n,\n),\n)$.
And we split the proof into two propositions, according to whether
this quotient is isomorphic to a free 2-step nilpotent Lie algebra or it is not. 

In general, given a nilpotent Lie algebra $\g$
with an adapted basis $B$ as in \ref{notation}, by taking  
$a_1,a_2\in B_1-B_2$, $\h=\langle B-\{a_1,a_2\}\rangle$ and 
$y$ a central element, the hypotheses of our construction are trivially fulfilled. 
In the case we are dealing with, in which $\g=\n\oplus\a_1$, 
we may take $y=c_1$.
Assuming that $\tilde\n\ncong L_{(2)}(m)$ we are able to prove that the resulting deformation is non-trivial.
In the case $\tilde\n\cong L_{(2)}(m)$, we do something different.

\begin{proposition}
If $\n\in\N_{m,k}$ and $\tilde\n\ncong L_2(m)$, then $\n\oplus \a_1$ is non-rigid in $\N_{m+1,k}$.
\end{proposition}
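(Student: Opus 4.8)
The plan is to apply Corollary~\ref{coro:main-construction} with a carefully chosen codimension-$2$ subalgebra, and then verify that the resulting deformation is genuinely non-trivial by producing an isomorphism invariant that jumps at $t\ne 0$. Concretely, I would fix an adapted basis $B=B_1\supseteq\cdots\supseteq B_k$ of $\n$ as in Notation~\ref{notation}, take $a_1,a_2\in B_1-B_2$ (two elements not in $[\n,\n]$, which exist as soon as $\dim\tilde\n\ge 2$; the case $\dim\n=1$ is trivial), set $\h=\langle B-\{a_1,a_2\}\rangle\oplus\a_1$ and $y=c_1$, the generator of $\a_1$. Since $c_1$ is central in $\n\oplus\a_1$, certainly $y\in Z_{\n\oplus\a_1}(\h)$, and by Corollary~\ref{coro:main-construction} we get a linear deformation
\[
[\ ,\ ]_t=[\ ,\ ]+t\,(a_1^*\wedge a_2^*\otimes c_1),
\]
which stays $k$-step nilpotent for all $t$ (the only new bracket is $[a_1,a_2]_t=tc_1$, and $c_1$ is central, so $\n^{k+1}_t=0$).

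The heart of the matter — and the step I expect to be the main obstacle — is proving that this deformation is non-trivial, i.e.\ that $(\n\oplus\a_1,[\ ,\ ]_t)\not\cong\n\oplus\a_1$ for generic $t\ne 0$. For $t\ne 0$ the new algebra $\g_t$ is \emph{indecomposable as a direct sum with an abelian factor}: the element $c_1$ now lies in $[\g_t,\g_t]$, so any central complement to $[\g_t,\g_t]$ has dimension one less than in $\n\oplus\a_1$ — more precisely, $\dim\bigl(Z(\g_t)\cap\text{(a vector-space complement of }[\g_t,\g_t])\bigr)$ is strictly smaller. The clean way to phrase this is via the maximal abelian direct factor: write $\g_t=\g_t'\oplus\a_{s(t)}$ with $\g_t'$ having no abelian factor; then $s(t)<s(0)$ for $t\ne0$ unless $\n\oplus\a_1$ already had its abelian part absorbed, which is exactly where the hypothesis $\tilde\n\ncong L_{(2)}(m)$ enters. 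If $s(0)>0$ we are done immediately; if $s(0)=0$, i.e.\ $\n\oplus\a_1$ has no abelian factor, one argues instead with a dimension count on $Z(\g)\cap[\g,\g]$ or on $\dim[\g,\g]$. Here is the subtlety the hypothesis controls: when $\tilde\n\cong L_{(2)}(m)$ it can happen that $c_1$ gets ``absorbed'' — $\n\oplus\a_1$ may itself be $\cong L_{(2)}(m)\oplus(\text{something})$ after a change of basis, or the invariant one wants to use degenerates — which is why that case is deferred to the next proposition. So the burden here is to exhibit one isomorphism invariant $I$ (a candidate: $I(\g)=\max\{s:\g\cong\g'\oplus\a_s\}$, or $\dim(\g/[\g,\g]) - \dim\{x+[\g,\g]: x\in Z(\g)\}$, the corank of the center modulo the commutator) and to show, using $\tilde\n\ncong L_{(2)}(m)$, that $I(\g_0)>I(\g_t)$ for all $t\ne0$, or at least for $t$ in a Zariski-dense set.

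The key steps in order: (i) set up the adapted basis and observe $\dim\tilde\n\ge 2$ so that $a_1,a_2\in B_1-B_2$ exist; (ii) invoke Corollary~\ref{coro:main-construction} to get $\mu_t$ and check $k$-step nilpotency of $\mu_t$ for all $t$; (iii) identify the relevant isomorphism invariant $I$ and compute $I(\n\oplus\a_1)$ in terms of the number of ``free'' generators of $\tilde\n$ and the size of the abelian factor; (iv) compute $I(\g_t)$ for $t\ne0$, showing it drops, and pinpoint that the computation in (iv) uses $\tilde\n\ncong L_{(2)}(m)$ — specifically, that $\n$ is not generated ``as freely as possible'' in degree $2$, so that moving $c_1$ into $[\g_t,\g_t]$ cannot be undone by an automorphism; (v) conclude $\mu_t$ is a non-trivial deformation and hence $\n\oplus\a_1$ is non-rigid in $\N_{m+1,k}$. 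I expect step (iv) to require the most care: one must rule out an exotic automorphism of $\g_t$ that re-decomposes it, and this is precisely the place where a clean invariant (rather than an ad hoc basis argument) pays off.
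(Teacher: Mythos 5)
Your construction is the same as the paper's (deform by $t\,(a_1^*\wedge a_2^*\otimes c_1)$ for two generators $a_1,a_2\in B_1-B_2$, via Corollary~\ref{coro:main-construction}), and $\dim[\g,\g]$ is indeed the invariant the paper uses. But the step you yourself flag as the main obstacle is left unresolved, and as stated it contains a false claim: for an \emph{arbitrary} choice of $a_1,a_2\in B_1-B_2$ it is not true that $c_1\in[\g_t,\g_t]$. The commutator of $\g_t$ is spanned by the unchanged brackets together with the single new element $[a_1,a_2]+tc_1$; so $c_1$ lies in $[\g_t,\g_t]$ if and only if the \emph{old} bracket $[a_1,a_2]$ already lies in the span of the remaining brackets. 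If the degree-two brackets of generators are linearly independent modulo $\n^3$ — e.g.\ $\n=\h_1$ with $a_1=x_1$, $a_2=y_1$ — then $\dim[\g_t,\g_t]=\dim[\g_0,\g_0]$, $c_1\notin[\g_t,\g_t]$, and in fact $\g_t\cong\g_0$ by the change of basis sending $[a_1,a_2]+tc_1$ back to $[a_1,a_2]$: the deformation is trivial. So the hypothesis $\tilde\n\ncong L_{(2)}(m)$ cannot be postponed to a final "rule out an exotic automorphism" step; it must be used at the outset to \emph{choose} $a_1,a_2$.

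Concretely, the missing idea is the following linear-algebra observation, which is the whole content of the paper's proof. Since $\tilde\n\ncong L_{(2)}(n_1)$, one has $n_2=\dim(\n^2/\n^3)<\binom{n_1}{2}$, so the images in $\tilde\n$ of the brackets $[x_j^1,x_i^1]$, $1\le i<j\le n_1$, are linearly dependent; after relabeling, $[x_2^1,x_1^1]$ lies in the span of the other such brackets together with $B_3$. Taking $a_1=x_1^1$, $a_2=x_2^1$ then forces $[a_1,a_2]\in[\g_t,\g_t]$ (all the other spanning brackets are unchanged by the deformation), hence $c_1\in[\g_t,\g_t]$ and $\dim[\g_t,\g_t]=\dim[\g_0,\g_0]+1$ for $t\ne 0$, which settles non-triviality without any discussion of automorphisms or of the maximal abelian factor. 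Relatedly, your remark "if $s(0)>0$ we are done immediately" is backwards: $s(0)\ge 1$ always holds for $\g_0=\n\oplus\a_1$ and gives nothing; what is needed is an upper bound on $s(t)$ for $t\ne 0$, i.e.\ precisely the computation on $\g_t$ that the dependence argument supplies.
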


\begin{proof}
We will construct a non-trivial linear deformation of $\n\oplus\a_1$ in $\N_{m+1,k}$, 
using Corollary \ref{coro:main-construction}. 

Since $\displaystyle n_2< \genfrac(){0pt}{2}{n-1}{2}$, 
the projection of the set $\{[x_j^1,x_i^1]: 1\leq i<j\leq n_1\}$ 
onto $\tilde\n$ is a linearly dependent set. 
Hence, after relabeling if necessary, we may assume that 
\begin{equation}\label{eq:notriv}
[x_2^1,x_1^1]\in\left\langle{\Big(\big\{[x_j^1,x_i^1]: 1\leq i<j
\leq n_1\}-\{[x_2^1,x_1^1]\big\}\Big)\cup B_3}\right\rangle.
\end{equation}
Taking $a_1=x_1^1, a_2=x_2^1, \h=\langle A\cup( B-\{x_1^1,x_2^1\}\rangle$ and $y=c_1$, from Corollary \ref{coro:main-construction}, we can consider
the linear deformation of $\n\oplus\a_1$ given by
\[ [\ ,\ ]_t=[\ ,\ ] + t(a_1\wedge a_2\otimes y). \]
It is easy to see that this deformation is $k$-step nilpotent. Also it is 
non-trivial because the dimension of the commutator corresponding to $t\ne 0$ is larger than the dimension of the original one (\ref{eq:notriv}). 
\end{proof}

For the last case, we shall assume without lost of generality,
that $\n$ has not abelian factor. In fact, if $\n$ has an abelian
factor then $\n\oplus\a_1$ falls in the case covered by 
Proposition \ref{prop:a2}.

\begin{proposition}
If $\n\in\N_{m,k}$ has not abelian factor, 
$\n\ncong \h_1$ and $\tilde\n\cong L_{(2)}(m)$, 
then $\n\oplus\a_1$ is non-rigid in $\N_{m+1,k}$.
\end{proposition}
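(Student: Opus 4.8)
The plan is to produce a non-trivial $k$-step nilpotent linear deformation of $\n\oplus\a_1$, but now the obstruction to non-triviality that worked in the previous proposition fails, precisely because $\tilde\n\cong L_{(2)}(m)$ forces $n_2=\binom{n_1}{2}$ and the brackets $[x_j^1,x_i^1]$ are linearly independent modulo $\n^3$. So the naive choice $y=c_1$ with $a_1,a_2\in B_1-B_2$ will give a deformation that merely re-expresses $c_1$ as a bracket, which is isomorphic to the original since $c_1\notin[\g,\g]$ — this is exactly the Grunewald–O'Halloran-type triviality we want to avoid. The key idea I would use instead is to build a deformation whose isomorphism class genuinely changes by exploiting the abelian factor together with the free structure of $\tilde\n$: rather than taking $y=c_1$, I would take $y$ to be a suitable element of $\n^2$ (a degree-2 bracket surviving in $L_{(2)}(m)$) and arrange $a_1,a_2$ among the generators, so that $[\ ,\ ]_t = [\ ,\ ] + t(a_1^*\wedge a_2^*\otimes y)$ alters the bracket among the generators in a way that cannot be absorbed by a change of basis.

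First I would fix the adapted basis $B$ of \ref{notation}, set $m=n_1$ (the number of generators), and recall that $\tilde\n\cong L_{(2)}(m)$ means $\dim\n^2/\n^3 = \binom{m}{2}$ with basis the images of $[x_j^1,x_i^1]$, $1\le i<j\le m$. Since $\n$ has no abelian factor and $\n\ncong\h_1$, we have $m\ge 2$ and in fact, if $m=2$ then $\n^2=\langle[x_2^1,x_1^1]\rangle$ is one-dimensional; since $\n\ncong\h_1$ and $\n$ has no abelian factor, $\n$ must have $\n^3\neq 0$, i.e.\ $k\ge 3$. I would then choose $a_1=x_1^1$, $a_2=x_2^1$, $\h=\langle A\cup(B-\{x_1^1,x_2^1\})\rangle$, and $y\in B_2$ a central-modulo-higher element that is actually central in $\n$ — or more carefully, pick $y\in Z(\n)\cap\n^2$, which is nonzero because $\n$ is nilpotent with $\n^2\neq 0$. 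Then $y\in Z_{\n\oplus\a_1}(\h)$ is immediate (every element of $\h$ is a combination of $c_1$ and basis vectors of $\n$, and $y$ commutes with all of $\n$), so Corollary \ref{coro:main-construction} applies and $[\ ,\ ]_t$ is a linear deformation, $k$-step nilpotent for all $t$ since we have only added a bracket landing in $\n^2$.

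The hard part — and the main obstacle — is proving that this deformation is non-trivial, i.e.\ that $[\ ,\ ]_t\not\cong[\ ,\ ]$ for small $t\ne 0$. The dimension-of-commutator trick no longer works because $y\in[\g,\g]$ already. Instead I would argue by examining a finer isomorphism invariant: for $t\ne 0$, the quotient $(\n\oplus\a_1)_t/(\n\oplus\a_1)_t^3$ is no longer $\cong L_{(2)}(m+1)$ — adding the relation $[x_1^1,x_2^1]_t = [x_1^1,x_2^1] + ty$ with $y$ a nontrivial combination of other degree-2 generators changes the number of independent degree-2 brackets, lowering $\dim(\cdot)^2/(\cdot)^3$ below $\binom{m+1}{2}$ (here $m+1$ counts the generators of $\n\oplus\a_1$, since $c_1\notin[\g,\g]$). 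More precisely, I would show that $\dim\big((\n\oplus\a_1)^2/(\n\oplus\a_1)^3\big)$ is an isomorphism invariant and compute that it strictly drops for $t\ne 0$, because the image of $[a_1,a_2]_t$ in the degree-2 quotient becomes linearly dependent on the other degree-2 generators whereas for $t=0$ it was independent. If for some degenerate sub-case (e.g.\ $y$ chosen so this invariant happens not to move) the argument stalls, the fallback is to instead track the isomorphism type of the associated graded Lie algebra or to use that the first term of the central series modulo the second has dimension $m+1$ for $t=0$ but the presence of the extra relation forces a generator to become redundant for $t\ne 0$ — in any case some graded invariant must distinguish the two, and I would pick whichever one cleanly does.
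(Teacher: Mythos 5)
Your construction does not yield a non‑trivial deformation, and the non‑triviality argument you sketch fails. With $a_1,a_2\in B_1-B_2$ and $y\in Z(\n)\cap\n^2$, the cocycle $a_1^*\wedge a_2^*\otimes y$ does not involve $c_1$ at all, so $c_1$ remains an abelian factor of $(\n\oplus\a_1)_t$ and you are really just deforming $\n$ internally inside $\N_{m,k}$. Moreover every value $[u,v]_t=[u,v]+t(a_1^*\wedge a_2^*)(u,v)\,y$ lies in $\n^2$, and since the brackets $[x_j^1,x_i^1]$ are linearly independent modulo $\n^3$ (this is exactly the hypothesis $\tilde\n\cong L_{(2)}(m)$), one checks that $(\n\oplus\a_1)_t^2=\n^2$ for all small $t$, hence $(\n\oplus\a_1)_t^j=\n^j$ for every $j\ge 2$: the invariant $\dim\big((\cdot)^2/(\cdot)^3\big)$ that you want to see drop does not move, and no dimension of the lower central series does. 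Worse, in concrete instances covered by the proposition your deformation is genuinely trivial. For $\n=L_{(2)}(3)$ and, say, $y=[x_1,x_3]$, the deformed algebra is again $2$-step nilpotent on the generators $x_1,x_2,x_3,c_1$ with derived algebra of dimension $\binom{3}{2}=3$, hence isomorphic to $L_{(2)}(3)\oplus\a_1$ (the change of basis $x_2\mapsto x_2+tx_3$ absorbs $t$). For $\n=L_{(3)}(2)$ one has $Z(\n)\cap\n^2=\n^3$, and $a_1^*\wedge a_2^*\otimes y=\delta^1 f$ for the linear map $f$ sending the basis element $[x_1,x_2]$ to $-y$ and the rest of the basis to $0$; the corresponding change of basis is an isomorphism because $y$ is central. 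Your fallback (``some graded invariant must distinguish the two'') cannot be repaired, because the two algebras are isomorphic.

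The paper's proof is designed precisely to avoid this trap: the abelian generator $c_1$ must participate in the deformation. It introduces $S=\{x\in\n:\,[x,\n^2]=0,\ \dim[x,\n]\le 1\}$ and the minimal filtration level $r_0$ at which $S$ meets $\n^{r_0}-\n^{r_0+1}$. If $r_0\ge 2$ it takes $a_1=x_1^1$, $a_2=c_1$ and $y=y_0\in S\cap(\n^{r_0}-\n^{r_0+1})$, so that $[x_1^1,c_1]_t=ty_0$ and $c_1$ ceases to be central; non‑triviality is proved by showing that an abelian factor $\langle z\rangle$ of the deformed algebra would produce either an abelian factor of $\n$ (if $z$ has no $c_1$‑component) or an element of $S$ at a level $r<r_0$, contradicting minimality. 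If $r_0=1$ (which forces $m=2$ and $k\ge 3$) it instead takes $a_2=[x_1^1,x_2^1]\in\n^2$ and $y=c_1$, which does enlarge the commutator. You would need an argument of this kind; the one you give does not close.
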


\begin{proof}
We will construct a non-trivial linear deformation of $\n\oplus\a_1$ in $\N_{m+1,k}$, 
using Corollary \ref{coro:main-construction}. 

Consider the sets $S$ and $R$,
\begin{align*}
&S=\big\{x\in\n:\, [x,\n^2]=0 \text{ and }\dim([x,\n])\leq 1\big\},\\ 
&R=\big\{r\in\{1,\dots,k\}:\, S\cap (\n^r-\n^{r+1})\neq\emptyset\big\}.
\end{align*}
Let $r_0=\min R$.
Notice that if $r_0=1$, then $m=2$. If $r_0\geq 2$, there exists $y_0\neq0$ such that $y_0\in S\cap\n^2$. 
Let us consider separately the cases $r_0=1$
and $r_0\ge 2$.
\smallskip 

\noindent {\bf Case 1:} If $r_0=1$, then $\tilde{\n}\cong L_{(2)}(2)$. Since $\n\ncong L_{(2)}(2)\simeq\h_1$, $k\geq 3$. 
We may choose 
$$B=\{x_1^1,x_2^1\}\cup\{[x_1^1,x_2^1]\}\cup B_3,$$
with $x_2^1\in S$.
We take $a_1=x_2^1$, $a_2=[x_1^1,x_2^1]$, $\h=\langle\{x_1^1\}\cup B_3\rangle$ and $y=c_1$.  Hence, using Corollary \ref{coro:main-construction}, we can consider
the linear deformation of $\n\oplus\a_1$ 
\[ [\ ,\ ]_t=[\ ,\ ] + t(a_1\wedge a_2\otimes y). \]
It is easy to see that this deformation is $k$-step nilpotent. Also it is 
non-trivial because $\dim((\n\oplus\a_1)_t^2)=\dim((\n\oplus\a_1)^2)+1$,
for $t\ne 0$.
\smallskip 

\noindent {\bf Case 2:} If $r_0\geq 2$, let $0\neq y_0\in S\cap(\n^{r_0}-\n^{r_0+1})$. Choose $B$ such that
\begin{align*}
&[ y_0,b]=0,\text{\ for all\ } b\in B-\{x_1^1\},
\end{align*} 
and take $a_1=x_1^1$, $a_2=c_1$, $\h=\langle B-\{x_1^1\}\rangle$ and $y=y_0$.  So that, from Corollary \ref{coro:main-construction}, we can consider
the linear deformation of $\n\oplus\a_1$ given by
\[ [\ ,\ ]_t=[\ ,\ ] + t(a_1\wedge a_2\otimes y). \]
This deformation is $k$-step nilpotent, because $y_0\in\n^2$.

In order to prove that this deformation is non-trivial,
assume instead that for arbitrary small $t$, 
$(\n\oplus\a_1)_t\cong\n\oplus\a_1$. 
Hence $(\n\oplus\a_1)_t$ has an abelian factor $\langle z\rangle$. 
Then
 \begin{gather}
[z, b]_t=0, \text{\ for all\ } b\in B; \label{eqn:z}\\
z\notin (\n\oplus\a_1)_t^2=\n^2. \label{eqn:z2}
\end{gather}
Writing $z=z_\n+\alpha c_1$, \eqref{eqn:z} implies that
\begin{gather}
[z_n, b]=0, \text{ for all } b\in B-\{x_1^1\}; \label{eqn:zn1} \\
[z_n, x_1^1]=[z_n, x_1^1]_t=t \alpha y_0. \label{eqn:zn2} 
\end{gather}
On the one hand, if $\alpha=0$, then $z_n\in Z(\n)$ and $z_\n\notin\n^2$
\eqref{eqn:z2} and therefore $z_\n$ is an abelian factor of $\n$.
On the other hand, if $\alpha\neq 0$, $z_\n\in S$ \eqref{eqn:zn1}
and then by \eqref{eqn:zn2} there exists $r\in R$ with $r<r_0$.
\end{proof}

Summarizing all we have proved, it follows that $k$-step nilpotent Lie algebras with an abelian factor are never $k$-rigid,
except for $\h_1\oplus\a_1$.

\begin{theorem}
	If $\n\in\N_{m,k}$ and $l\geq 1$, then $\n\oplus\a_l$ is rigid in $\N_{m+l,k}$
	if and only if $\n\cong \h_1$ and $l=1$.
\end{theorem}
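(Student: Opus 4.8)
The plan is to assemble the final theorem as a direct consequence of the results established throughout Section 6 together with the rigidity statements already proved. The statement has two directions: the ``if'' direction (that $\h_1\oplus\a_1$ is rigid in $\N_{4,2}$) and the ``only if'' direction (that in all other cases $\n\oplus\a_l$ is non-rigid in $\N_{m+l,k}$). I would first handle the ``if'' direction, which is essentially immediate: when $\n\cong\h_1$ and $l=1$ we have $m=3$, $k=2$, so $\N_{m+l,k}=\N_{4,2}$, and the preceding Remark observes that $\N_{4,2}$ contains exactly two non-isomorphic Lie algebras, $\a_4$ and $\h_1\oplus\a_1$; since $\h_1\oplus\a_1$ is not the abelian one, its orbit is the complement of a proper closed subvariety, hence open, so it is rigid. (Alternatively one can cite that it is the generic point of the single non-trivial irreducible component.)

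For the ``only if'' direction I would argue contrapositively: assume we are not in the exceptional case, i.e. either $l\ge 2$, or $l=1$ but $\n\ncong\h_1$, and show $\n\oplus\a_l$ is non-rigid in $\N_{m+l,k}$. The plan is to split into cases exactly matching the propositions of this section. If $l\ge 3$, this is item (3) of the list near the start of Section 6 (the deformation $(\lambda\oplus\mu)_t=\lambda\oplus t\nu$ with $\nu$ a non-abelian $k$-step nilpotent bracket on the $\ge 3$-dimensional factor). If $l=2$, it is Proposition \ref{prop:a2}. If $l=1$ and $\n$ has an abelian factor, then $\n\oplus\a_1\cong\n'\oplus\a_2$ for some $\n'$, reducing to Proposition \ref{prop:a2} again. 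If $l=1$ and $\n$ has no abelian factor, look at $\tilde\n=\n/\n^3$: if $\tilde\n\ncong L_{(2)}(m)$ apply the first of the two $l=1$ propositions, and if $\tilde\n\cong L_{(2)}(m)$ apply the last proposition (here the hypothesis $\n\ncong\h_1$ is exactly what that proposition requires, since $\h_1$ is the only obstruction when $\n$ has no abelian factor). In every case we produce a non-trivial $k$-step nilpotent linear deformation, so $\n\oplus\a_l$ is non-rigid in $\N_{m+l,k}$.

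I do not expect any genuine obstacle here, since all the work has been done in the propositions; the only care needed is bookkeeping to verify that the case list is exhaustive and that the exceptional case is correctly excluded from each non-rigidity case. In particular one must check that when $l=1$, $\n\ncong\h_1$, and $\n$ has no abelian factor and $\tilde\n\cong L_{(2)}(m)$, the last proposition genuinely applies — which it does, its hypotheses being precisely ``$\n$ has no abelian factor, $\n\ncong\h_1$, $\tilde\n\cong L_{(2)}(m)$''. And when $\n$ \emph{does} have an abelian factor, one should note that writing $\n=\n'\oplus\a_1$ gives $\n\oplus\a_1=\n'\oplus\a_2$ with $\n'\in\N_{m-1,k'}$ for some $k'\le k$; since $\N_{m-1,k'}\subseteq\N_{m-1,k}$, Proposition \ref{prop:a2} still yields a non-trivial $k$-step deformation inside $\N_{m+1,k}$. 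Assembling these observations gives the theorem.

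\begin{proof}
Suppose first that $\n\cong\h_1$ and $l=1$. Then $m=3$ and $k=2$, so $\N_{m+l,k}=\N_{4,2}$. By the preceding Remark, $\a_4$ and $\h_1\oplus\a_1$ are the only two non-isomorphic Lie algebras in $\N_{4,2}$; hence the orbit of $\h_1\oplus\a_1$ is the complement of the orbit of $\a_4$, which is closed, so $\O(\h_1\oplus\a_1)$ is open in $\N_{4,2}$ and $\h_1\oplus\a_1$ is rigid there.

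Conversely, assume we are not in this case and let us show $\n\oplus\a_l$ is non-rigid in $\N_{m+l,k}$. If $l\ge 3$, item (3) of the list at the beginning of this section applies: taking any non-abelian $k$-step nilpotent bracket $\nu$ on the abelian factor, $(\lambda\oplus\mu)_t=\lambda\oplus t\nu$ is a non-trivial deformation in $\N_{m+l,k}$. If $l=2$, then $\n\oplus\a_2$ is non-rigid in $\N_{m+2,k}$ by Proposition \ref{prop:a2}. If $l=1$ and $\n$ has an abelian factor, write $\n\cong\n'\oplus\a_1$ with $\n'\in\N_{m-1,k}$; then $\n\oplus\a_1\cong\n'\oplus\a_2$ and Proposition \ref{prop:a2} again provides a non-trivial $k$-step nilpotent deformation, so $\n\oplus\a_1$ is non-rigid in $\N_{m+1,k}$. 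Finally, suppose $l=1$ and $\n$ has no abelian factor; in particular $\n\ncong\h_1$ by assumption. Consider $\tilde\n=\n/\n^3$. If $\tilde\n\ncong L_{(2)}(m)$, then $\n\oplus\a_1$ is non-rigid in $\N_{m+1,k}$ by the corresponding Proposition above. If $\tilde\n\cong L_{(2)}(m)$, then the hypotheses of the last Proposition are met ($\n$ has no abelian factor, $\n\ncong\h_1$, and $\tilde\n\cong L_{(2)}(m)$), so $\n\oplus\a_1$ is non-rigid in $\N_{m+1,k}$. In every case $\n\oplus\a_l$ is non-rigid in $\N_{m+l,k}$, which completes the proof.
\end{proof}
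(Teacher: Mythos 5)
Your proposal is correct and matches the paper exactly: the paper states this theorem as a one-sentence summary ("Summarizing all we have proved\dots") of the preceding Remark on $\N_{4,2}$, item (3) of the list for $l\ge 3$, Proposition \ref{prop:a2} for $l=2$ (and for $l=1$ with an abelian factor), and the two $l=1$ propositions split according to whether $\tilde\n\cong L_{(2)}(m)$. Your case analysis is the same assembly, just written out explicitly.
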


%-----------------------------------------
\subsection{The exceptional case}

\ 

The only exceptional case for which there is no unified answer on whether 
$\g\oplus\a_l$ is rigid or not in $\L_{m+l}$,
is for $\g$ a perfect Lie algebra and $l=1$.
Example \ref{ex:semisimple} shows that the answer might be ``rigid''.
The following example shows that the answer might be ``non-rigid''.

\begin{example}
Let $\g$ be the complex 5-dimensional Lie algebra with basis $\{a,b,c,d,e\}$ and
bracket defined by:
\[
\begin{gathered}{}
\lambda(a,b)=2b, \qquad \lambda(a,c)=-2c, \qquad \lambda(b,c)=a, \\
\lambda(a,d)=d, \qquad \lambda(a,e)=-e, \qquad \lambda(b,e)=d, \qquad \lambda(c,d)=e.
\end{gathered}
\] 
Notice that $\g=\sl_2\ltimes \C^2$ where the semidirect product is given by the $2$-dimensional 
irreducible representation of $\sl_2$.
It holds that $H^2(\g,\g)=0$ and hence $\g$ is rigid.

Let $\overline{\g}=\g\oplus\a_1$ where $\a_1$ is an abelian factor,
let its Lie bracket be denoted by $[\ ,\ ]$ and let $\{f\}$ be a basis for $\a_1$.
Consider the linear deformation of $\overline{\g}$ given by
$$[\ ,\ ]_t=[\ ,\ ]+t\varphi,$$
where $\varphi$ is the 2-cocycle
\[ \varphi:= d^*\wedge e^* \otimes f. \]
Then $[\ ,\ ]_t$ is given by
\[
\begin{gathered}{}
[a,b]_t=2b, \qquad [a,c]_t=-2c, \qquad [b,c]_t=a, \\
[a,d]_t=d, \qquad [a,e]_t=-e, \qquad [b,e]_t=d, \qquad [c,d]_t=e, 
\qquad [d,e]_t=tf. 
\end{gathered}
\]
Clearly $\overline{\g}_t$ is perfect, for every $t\ne 0$, so that
it has no abelian factor and the deformation is non-trivial.
Thus $\overline{\g}$ is non-rigid.

\end{example}

\section{Appendix}
\smallskip
\begin{center}by \textsc{Diego Sulca}\end{center}
\medskip

The classical Nijenhuis-Richardson theorem asserts that an $n$-dimensional Lie algebra $\g$ for which the 
second Cartan-Eilenberg cohomology $H^2(\g,\g)$ is zero must be rigid in the variety of $n$-dimensional Lie algebras \cite{NR}. 
The proof given in \cite{NR} can be easily adapted to show analogous results for other classes of algebras. 
The general strategy is discussed by Remm in \cite[Sections 2.2 and 2.3]{Remm}. 
We provide full details and apply this generalization to the variety of $n$-dimensional $k$-step nilpotent Lie algebras 
and the variety of $n$-dimensional $k$-step solvable Lie algebras. 
We make use of the language of schemes.
For the information of algebraic groups acting on schemes, we refer to \cite[Chapter 7]{Milne}. 
Throughout, $\K$ denotes any field of characteristic zero. 

\

\noindent Let $\mathbb{A}_\K^m$ be the affine $m$-space over $\K$ and let $X\subset \mathbb{A}_\K^m$ be closed subscheme. 
Given a rational point $x\in X(\K)$ the Zariski tangent space $T_x X$ of $X$ at $x$ can be computed as
$$T_xX=\{y\in \K^m: x+\varepsilon y\in X(\K[\varepsilon])\},$$
where $\K[\varepsilon]=\K+\K\varepsilon$ is the $\K$-algebra of dual numbers ($\varepsilon^2=0$). 
We have $\dim_x X\leq \dim T_x X$, where $\dim_x X$ is the local dimension of $X$ at $x$ 
(i.e., the dimension of the local ring of $X$ at $x$) and $\dim T_x X$ is the dimension of $T_x X$ as vector space over $\K$. 
The equality holds if and only if $x$ is a non-singular point of $X$. 

\

\noindent Fix now  $n\in\mathbb{N}$. We think of $\mathbb{A}_{\K}^{n^3}$ as representing the functor
$$
R\mapsto \{R\mbox{-bilinear maps}\ R^n\times R^n\to R^n\}=\{\K\mbox{-bilinear maps}\ \K^n\times \K^n\to R^n\}
$$
from commutative $\K$-algebras to the category of sets.

The linear group $GL_n$ (viewed as affine group scheme over $\K$) acts on $\mathbb{A}_{\K}^{n^3}$ as follows: 
given a commutative $\K$-algebra $R$, a matrix $g\in GL_n(R)$ and an $R$-bilinear map $\mu:R^n\times R^n\to R^n$, we define $g\cdot\mu:R^n\times R^n\to R^n$ by setting 
$$(g\cdot \mu)(x,y):=g(\mu(g^{-1}(x),g^{-1}(y))),\quad x,y\in R^n.$$ 
Let ${X}\subset\mathbb{A}_{\K}^{n^3}$ be a closed subscheme that is invariant under the action of $GL_n$.
%This action induces actions of $GL_n$ on the subschemes $N_{n,k}$, $S_{n,k}$ and $L_n$.
Fix a rational point $\mu\in {X}(\K)$ (if there are any). 
The image of the orbit map $GL_n\to {X}$, $g\to g\cdot\mu$, is locally closed in ${X}$. The {\em orbit $O(\mu)$ of $\mu$} is this image equipped with its structure of a reduced subscheme of ${X}$. It is smooth over $\K$.
The {\em isotropy group $G_\mu$ at $\mu$} is a closed subgroup of $GL_n$, and for all commutative $\K$-algebras $R$,
$$G_\mu(R)=\{g\in GL_n(R): g\cdot\mu_R=\mu_R\}$$
where $\mu_R\in {X}(R)$ denotes the image of $\mu\in {X}(\K)$ in ${X}(R)$.
The orbit map $GL_n\to O(\mu)$ induces a $\K$-isomorphism 
\begin{align}\label{the orbit as a quotient}
GL_n/G_\mu\cong O(\mu).
\end{align}

\

\noindent For $\mu\in {X}(\K)$ we define
\begin{align*}
Z_{{X}}^2(\mu,\mu)&:=T_\mu({X})=\{\omega:\K^n\times\K^n\to\K^n\ | \ \mu+\varepsilon\omega\in {X}(\K[\varepsilon])\},\\
B^2(\mu,\mu)&:=\{\delta g:\K^n\times\K^n\to \K^n\ |\ g\in \mathfrak{gl}_n(\K)\},
\end{align*}
where for $g\in\mathfrak{gl}_n(\K)$, $\delta g$ is the $\K$-bilinear map
$$\delta g(x,y)=g(\mu(x,y))-\mu(g(x),y)-\mu(x,g(y)), \ x,y\in\K^n.$$
Notice that 
$$B^2(\mu,\mu)\subseteq Z_{{X}}^2(\mu,\mu).$$
Indeed, given $g\in\mathfrak{gl}_n(\K)$, we have $I+\varepsilon g\in GL_n(\K[\varepsilon])$, and it is easy to check that 
\begin{align}\label{useful identity}
\mu+\varepsilon \delta g=(I+\varepsilon g)\cdot \mu_{K[\varepsilon]}\in {X}(\K[\varepsilon]),
\end{align}
which shows that $\delta g\in Z_X^2(\mu,\mu)$.
Finally, set
\begin{align*}
H_{{X}}^2(\mu,\mu):=\frac{Z_{X}^2(\mu,\mu)}{B^2(\mu,\mu)}.
\end{align*}
\begin{theorem}\label{second cohomology zero implies rigidity}
Let $\K$ be a field of characteristic zero.
For $\mu\in \mathcal(\K)$ the following conditions are equivalent.
\begin{enumerate}
  \item  $H_{X}^2(\mu,\mu)=0$.
  \item  $O(\mu)$ is an open subscheme of ${X}$. 
  \item  $O(\mu)$ is an open subset of $X$ and ${X}$ is reduced at $\mu$.
\end{enumerate}
\end{theorem}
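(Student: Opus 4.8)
The plan is to prove the chain of implications $(1)\Rightarrow(2)\Rightarrow(3)\Rightarrow(1)$, working throughout with the scheme-theoretic structures set up above and exploiting the identification $GL_n/G_\mu\cong O(\mu)$ from \eqref{the orbit as a quotient}. The central object is the orbit map $\psi:GL_n\to X$, $g\mapsto g\cdot\mu$, and the key computation is its differential at the identity. Differentiating $\psi$ at $I\in GL_n(\K)$ (equivalently, plugging $I+\varepsilon g$ into $\psi$ and using \eqref{useful identity}) gives $d\psi_I(g)=\delta g$, so the image of $d\psi_I$ is exactly $B^2(\mu,\mu)$, while $T_\mu X=Z_X^2(\mu,\mu)$ by definition. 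Because $O(\mu)$ is smooth over $\K$ and $\psi$ factors through the isomorphism $GL_n/G_\mu\cong O(\mu)$, the tangent space $T_\mu O(\mu)$ is precisely $\Im(d\psi_I)=B^2(\mu,\mu)$; here one uses that in characteristic zero the quotient map $GL_n\to GL_n/G_\mu$ is smooth and surjective on tangent spaces. I would isolate this as the main lemma: $T_\mu O(\mu)=B^2(\mu,\mu)$ inside $T_\mu X=Z_X^2(\mu,\mu)$, so that $H_X^2(\mu,\mu)=T_\mu X/T_\mu O(\mu)$.

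For $(1)\Rightarrow(2)$: assume $H_X^2(\mu,\mu)=0$, i.e.\ $T_\mu X=B^2(\mu,\mu)=T_\mu O(\mu)$. Since $O(\mu)$ is a smooth (hence reduced, non-singular) locally closed subscheme of $X$, we have $\dim O(\mu)=\dim T_\mu O(\mu)=\dim T_\mu X\geq \dim_\mu X\geq \dim_\mu O(\mu)=\dim O(\mu)$, where the last equality uses that $O(\mu)$, being a $GL_n$-orbit, is homogeneous, so its local dimension at every point equals its dimension, and $\dim_\mu X\geq \dim_\mu O(\mu)$ because $O(\mu)$ is a locally closed subscheme through $\mu$. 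Hence all inequalities are equalities: $\dim_\mu X=\dim T_\mu X$, so $\mu$ is a non-singular point of $X$, and moreover the local dimensions of $X$ and of its locally closed subscheme $O(\mu)$ agree at $\mu$. Because $O(\mu)$ is locally closed, it is open in its closure $\overline{O(\mu)}$; and since $\mu$ is a non-singular point of $X$ lying on the closed subscheme $\overline{O(\mu)}\subseteq X$ with the same local dimension, $\overline{O(\mu)}$ must contain an open neighborhood of $\mu$ in $X$ (a non-singular point lies on a unique irreducible component, of dimension $\dim T_\mu X$). Intersecting, $O(\mu)$ contains an open neighborhood of $\mu$ in $X$; translating by $GL_n$ and using invariance of $X$, $O(\mu)$ is open in $X$ as a subset, and scheme-theoretically it is open because $X$ is non-singular (in particular reduced) along $O(\mu)$.

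For $(2)\Rightarrow(3)$: if $O(\mu)$ is an open subscheme of $X$, then in particular it is open as a subset, and since $O(\mu)$ is reduced (being smooth over $\K$) and agrees with $X$ on a neighborhood of $\mu$, $X$ is reduced at $\mu$. For $(3)\Rightarrow(1)$: assume $O(\mu)$ is open in $X$ as a subset and $X$ is reduced at $\mu$. Then a neighborhood of $\mu$ in $X$ coincides, as a topological space, with $O(\mu)$, which is smooth; combined with reducedness of $X$ at $\mu$, this forces $X$ to be smooth at $\mu$, so $\dim_\mu X=\dim T_\mu X$. On the other hand $X$ and $O(\mu)$ share a neighborhood of $\mu$, so $\dim T_\mu X=\dim T_\mu O(\mu)=B^2(\mu,\mu)$ by the main lemma; hence $Z_X^2(\mu,\mu)=T_\mu X=B^2(\mu,\mu)$ and $H_X^2(\mu,\mu)=0$. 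I expect the main obstacle to be the scheme-theoretic bookkeeping in $(1)\Rightarrow(2)$ — namely rigorously passing from the tangent-space equality to openness of the orbit, which requires carefully invoking that a non-singular point of $X$ has a well-defined local component of dimension $\dim T_\mu X$ and that the orbit closure, being irreducible of that dimension and passing through $\mu$, must be that component near $\mu$; the input from \cite[Chapter 7]{Milne} on smoothness of orbit maps and quotients in characteristic zero is what makes the differential computation clean, and that should be cited explicitly.
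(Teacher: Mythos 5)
Your proposal is correct and follows essentially the same route as the paper's proof: identify $T_\mu O(\mu)$ with $B^2(\mu,\mu)$ via the orbit map and $GL_n/G_\mu\cong O(\mu)$, run the chain of dimension (in)equalities between $\dim B^2(\mu,\mu)$ and $\dim Z^2_X(\mu,\mu)$, and use non-singularity (resp.\ reducedness at $\mu$) to close the cycle of implications. The only cosmetic difference is in propagating the local coincidence of $X$ and $O(\mu)$ to openness of the whole orbit: you invoke translation by $GL_n$, while the paper applies the argument at each rational point and uses that $O(\mu)(\K)$ is dense in $O(\mu)$ because $\K$ is infinite — a detail worth spelling out since $\K$ is not assumed algebraically closed here.
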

\begin{proof} We first review the fact that $T_\mu O(\mu)$ is isomorphic to $B^2(\mu,\mu)$. 
	%On the one hand, $\delta:\mathfrak{gl}_n(\K)\to B^2(\mu,\mu)$ induces an isomorphism $\mathfrak{gl}_n(\K)/\on{Der}(\mu)\cong B^2(\mu,\mu)$, where $\on{Der}(\mu)$ denotes the set of $\K$-derivations of the $\K$-algebra $(\K^n,\mu)$.
	By (\ref{the orbit as a quotient}), there are $\K$-linear isomorphisms
	$$T_\mu O(\mu)\cong T_e(GL_n/G_\mu)\cong\mathfrak{gl}_n(\K)/\on{Lie}(G_\mu)$$
	where $e\in GL_n/G_\mu$ denotes the image of the identity of $GL_n$.
	Thus, it is enough to show that $\on{Lie}(G_\mu)$ is the kernel of the surjective map $\delta:\mathfrak{gl}_n(\K)\to B^2(\mu,\mu)$. 
	Now
	\begin{align*} 
	\on{Lie}(G_\mu)&=\{g\in \mathfrak{gl}_n(\K): I+\varepsilon g\in G_\mu(\K[\varepsilon])\}\\
	&=\{g\in \mathfrak{gl}_n(\K): (I+\varepsilon g)\cdot \mu_{K[\varepsilon]}=\mu_{K[\varepsilon]}\}
	\end{align*}
	As observed in (\ref{useful identity}) we have $(I+\varepsilon g) \mu_{K[\varepsilon]}=\mu+\varepsilon\delta g$. Hence, 
	$(I+\varepsilon g)\cdot \mu_{K[\varepsilon]}=\mu_{K[\varepsilon]}$ if and only if $\delta g=0$, as was to be shown.
	\iffalse  
	$$(I+\varepsilon g)(\mu((I-\varepsilon g)(x),(I-\varepsilon g)(y)))=\mu(x,y),\quad \forall x,y\in \K^n.$$  
	We compute
	\begin{align*}
	&(I+\varepsilon g)(\mu((I-\varepsilon g)(x),(I-\varepsilon g)(y)))=	\\
	&=(I+\varepsilon g)(\mu(x,y)-\varepsilon (\mu(x,g(y))+\mu(g(x),y))\\
	&=\mu(x,y)+\varepsilon(g(\mu(x,y))-\mu(x,g(y))-\mu(g(x),y))).	
	\end{align*}
	Hence $g\in \on{Lie}(G_\mu)$ if and only if $$g(\mu(x,y))-\mu(x,g(y))-\mu(g(x),y)=0,$$ that is, if and only if $g\in \on{Der}(\mu)$. 
	\fi 
	%This completes the proof of the lemma.
	
	We now proceed with the proof of the equivalences. Note first that
	\begin{align}\label{ineq main theorem}
	\dim B^2(\mu,\mu)&=\dim T_\mu O(\mu)=\dim_\mu O(\mu)\leq\dim_\mu {X}\leq\\
	\nonumber&\leq\dim T_\mu{X}_{\textrm{red}}\leq \dim T_\mu {X}=\dim Z_{{X}}^2(\mu,\mu),
	\end{align}
	where in the first equality we use the isomorphism of the above paragraph and in the second one we use the fact that $O(\mu)$ is smooth. The inequalities are clear.
	
	If $H_{{X}}^2(\mu,\mu)=0$ then all the inequalities in (\ref{ineq main theorem}) become equalities. The equality $\dim_\mu {X}=\dim T_\mu {X}$ implies that $\mu$ is a non-singular point of ${X}$. Since the set of non-singular points in a scheme of finite type over a field is open, the schemes ${X}$ and $O(\mu)$ are regular of the same dimension at a neighborhood of $\mu$. As $O(\mu)$ is a locally closed subscheme of ${X}$, we deduce that $O(\mu)$ and ${X}$ coincide locally at $\mu$.  
	Given another rational point $\mu'\in O(\mu)(\K)$, clearly $O(\mu')=O(\mu)$ and $H_{{X}}^2(\mu',\mu')\cong H_{{X}}^2(\mu,\mu)$, which is assumed to be zero. By applying the above reasoning to each such $\mu'\in O(\mu)(\K)$ we find that ${X}$ and $O(\mu)$ coincide as schemes locally at each rational point of $O(\mu)$. Now, as $\K$ is infinite, $GL_n(\K)$ is dense in $GL_n$ hence $O(\mu)(\K)$ is dense in $O(\mu)$. It follows that $O(\mu)$ is an open subscheme of ${X}$. This completes the proof of (1)$\Rightarrow$(2).
	
	(2)$\Rightarrow$(3) is obvious. We finally show that (3)$\Rightarrow$(1). The first hypothesis implies that $O(\mu)$ is an open subscheme of ${X}_{\textrm{red}}$, hence the first two inequalities of (\ref{ineq main theorem}) are indeed equalities. Since in addition ${X}$ is reduced at $\mu$, the last inequality is also an equality. Summarizing, all the inequalities in (\ref{ineq main theorem}) are equalities, hence $H_{{X}}^2(\mu,\mu)=0$.
	\iffalse  
	The proofs for the other cases are the same. For instance, the starting point in the case of $N_{n,k}$ is 
	\begin{align*}
	\dim B^2(\mu,\mu)&=\dim T_\mu O(\mu)=\dim_\mu O(\mu)\leq\dim_\mu N_{n,k}\leq\\
	\nonumber&\leq\dim T_\mu(N_{n,k})_{\textrm{red}}\leq \dim T_\mu N_{n,k}=\dim Z_{k\textrm{-nil}}^2(\mu,\mu),
	\end{align*}
	which uses Lemma \ref{tangent space of N(n,k)} instead of Lemma \ref{tangent space of the scheme of Lie algebras}.
	\fi 
\end{proof}

%\subsection*{Computing $T_\mu X$ for the schemes $X=O(\mu),\ L_n,\ S_{n,k}\ \mbox{and  } N_{n,k}$}

\

\noindent We compute $Z_X^2(\mu,\mu)$ for three examples of $X$.
\subsection*{The scheme $L_n$ of Lie brackets on $\K^n$ \cite{NR}}
Let $L_n\subset\mathbb{A}_\K^{n^3}$ be the closed subscheme such that for all commutative $\K$-algebras $R$,
$L_n(R)$ is the set of $R$-bilinear maps $\mu: R^n\times R^n\to R^n$ that are alternating (i.e., $\mu(x,x)=0$ for all $x\in\mathbb{R}$) and
satisfy the Jacobi identity
\begin{align*}
\sc\mu(\mu(x,y),z)=0,\quad \forall x,y,z\in R^n.
\end{align*}

Given $\mu\in L_n(\K)$, by definition $Z_{L_n}(\mu,\mu)$ is the set of bilinear maps $\omega:\K^n\times\K^n\to\K^n$ 
such that $\mu+\varepsilon\omega$ is alternating and satisfies the Jacobi identity. 
In other words, for all $x,y,z\in \K^n$,
\begin{align*}
(\mu+\varepsilon\omega)(x,x)&=0,\\
\sc \mu(\mu(x,y)+\varepsilon\omega(x,y),z)+\sc\varepsilon\omega(\mu(x,y)+\varepsilon \omega(x,y),z)&=0.
\end{align*}
As $\mu$ is alternating, the first equality is equivalent to saying that $\omega$ is alternating.
Since $\varepsilon^2=0$ and $\mu$ satisfies the Jacobi identity, the left hand side of the second equality is simply $\varepsilon \delta\omega(x,y,z)$, where
$$
\delta\omega(x,y,z):=\sc \mu(\omega(x,y),z)+\sc \omega(\mu(x,y),z)
$$
We conclude that
\iffalse
$$
\sum_{\textrm{cyclic}} \mu(x,\omega(y,z))-\sum_{\textrm{cyclic}} \omega(\mu(x,y),z). 
$$
\fi
\begin{align*}
Z_{L_n}^2(\mu,\mu)=\{\omega:\K^n\times \K^n\to \K^n\ |\ \omega\ \mbox{is } \K\mbox{-bilinear, alternating and } \delta\omega=0\}.
\end{align*}
It follows that $H_{L_n}^2(\mu,\mu)$ is the usual Cartan-Eilenberg cohomology of the Lie algebra $(\K^n,\mu)$, denoted simply by $H^2(\mu,\mu)$.

\begin{corollary}\label{coro:Lie-rig}
Let $\K$ be an algebraically closed field of characteristic zero and let $\L_{n}=L_{n}(\K)$, 
with the structure of affine algebraic variety. 
Given $\mu\in \L_{n}$, if $H^2(\mu,\mu)=0$, then $\mu$ is rigid in $\L_{n}$. 	
\end{corollary}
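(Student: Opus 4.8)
The plan is to obtain this as a direct specialization of Theorem \ref{second cohomology zero implies rigidity} to the scheme $X=L_n$, combined with the identification of $H^2_{L_n}(\mu,\mu)$ with the Chevalley--Eilenberg cohomology $H^2(\mu,\mu)$ established just above. So the argument is essentially a bookkeeping step translating the scheme-theoretic conclusion into the language of the classical affine variety $\L_n$.

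First I would record that $L_n$ is a closed subscheme of $\mathbb{A}_\K^{n^3}$ invariant under the $\GL_n$-action, so the hypotheses of Theorem \ref{second cohomology zero implies rigidity} are satisfied. Since $H^2_{L_n}(\mu,\mu)=H^2(\mu,\mu)$, the assumption $H^2(\mu,\mu)=0$ together with the implication (1)$\Rightarrow$(2) of that theorem yields that the orbit $O(\mu)$ is an open subscheme of $L_n$.

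Next I would pass from schemes to the classical variety. Because $\K$ is algebraically closed, $\L_n=L_n(\K)$ is precisely the set of $\K$-points of $L_n$, and an open subscheme has an open locus of $\K$-points; hence $O(\mu)(\K)$ is Zariski open in $\L_n$. Moreover, the orbit map $\GL_n\to O(\mu)$ induces on $\K$-points the surjection $\GL_n(\K)\to O(\mu)(\K)$, so $O(\mu)(\K)$ coincides with the $\GL_n(\K)$-orbit $\O(\mu)$, i.e.\ with the isomorphism class of $\mu$. Therefore $\O(\mu)$ is open in $\L_n$, which is exactly the statement that $\mu$ is rigid in $\L_n$.

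The only point needing care --- not really an obstacle --- is this last translation: one must use that $\K$ is algebraically closed to guarantee both that openness of $O(\mu)$ as a subscheme descends to openness of $O(\mu)(\K)$ inside $L_n(\K)$, and that the $\K$-points of the orbit scheme are exactly the set-theoretic $\GL_n(\K)$-orbit. Everything else is an immediate invocation of the general theorem and of the cohomology computation preceding it.
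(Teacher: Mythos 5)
Your proposal is correct and follows exactly the route the paper intends: the corollary is stated as an immediate consequence of Theorem \ref{second cohomology zero implies rigidity} applied to $X=L_n$, together with the identification of $H^2_{L_n}(\mu,\mu)$ with the Chevalley--Eilenberg cohomology carried out just before it. Your added care about passing from the open subscheme $O(\mu)$ to the open set of $\K$-points and identifying $O(\mu)(\K)$ with the $\GL_n(\K)$-orbit (using that $\K$ is algebraically closed) is exactly the bookkeeping the paper leaves implicit.
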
	

\subsection*{The scheme of $k$-solvable Lie brackets on $\K^n$} Let $S_{n,k}\subset L_n$ be the closed subscheme such that for all commutative $\K$-algebras $R$,
$S_{n,k}(R)$ is the set of those $\mu\in L_n(R)$ such that
$$
\mu^{(k)}(x_1,\ldots,x_{2^k})=0,\quad\forall x_1,\ldots,x_{2^k}\in R^n,
$$
where $\mu^{(i)}:\underbrace{R^n\times\cdots\times R^n}_{2^i}$ is defined inductively by setting $\mu^{(0)}=\on{id}_{R^n}$ and 
$$\mu^{(i)}(x_1,\ldots,x_{2^i})=\mu(\mu^{(i-1)}(x_1,\ldots,x_{2^{i-1}}),\mu^{(i-1)}(x_{2^{i-1}+1},\ldots,x_{2^i}))
$$
for $i\geq 1$.

Given $\mu\in S_{n,k}(\K)$, by definition $Z_{S_{n,k}}^2(\mu,\mu)$ is the set of those $\omega\in Z_{L_n}^2(\mu,\mu)$ satisfying the additional condition $(\mu+\varepsilon \omega)^{(k)}=0$. One easily checks that 
\begin{align*}
(\mu+\varepsilon \omega)^{(k)}(x_1,\ldots,x_{2^i})=\mu^{(k)}(x_1,\ldots,x_{2^i})+\varepsilon \sigma_k\omega(x_1,\ldots,x_{2^i}), 
\end{align*}
where $\sigma_i\omega:\underbrace{R^n\times\cdots\times R^n}_{2^i}\to R^n$ is defined inductively as follows: $\sigma_1\omega=\omega$, and
\begin{align*}
\sigma_i\omega(x_1,\ldots,x_{2^i})=&\mu(\mu^{(i-1)}(x_1,\ldots,x_{2^{i-1}}),\sigma_{i-1}\omega(x_{2^{i-1}+1},\ldots,x_{2^i}))\\
&+\mu(\sigma_{i-1}\omega(x_1,\ldots,x_{2^{i-1}}),\mu^{(i-1)}(x_{2^{i-1}+1},\ldots,x_{2^i}))\\
&+\omega(\mu^{(i-1)}(x_1,\ldots,x_{2^{i-1}}),\mu^{(i-1)}(x_{2^{i-1}+1},\ldots,x_{2^i})),
\end{align*}
for $i\geq 2$. 
Since $\mu^{(k)}(x_1,\ldots,x_{2^i})=0$ we obtain that
\begin{align*}
Z_{S_{n,k}}^2(\mu,\mu)=\{\omega\in Z_{L_n}^2(\mu,\mu)\ | \ \sigma_k\omega =0\}.
\end{align*}
We shall use the notation $H_{k{\textit{-sol}}}^2(\mu,\mu):=H_{S_{n,k}}^2(\mu,\mu)$.

\begin{corollary}\label{coro:k-sol-rig}
Let $\K$ be an algebraically closed field of characteristic zero and let $\S_{n,k}=S_{n,k}(\K)$, 
with the structure of affine algebraic variety. Given $\mu\in \S_{n,k}$, if $H^2_{k{\textit{-sol}}}(\mu,\mu)=0$, 
then $\mu$ is rigid in $\S_{n,k}$. 	
\end{corollary}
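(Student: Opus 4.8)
The plan is to obtain this as a direct application of Theorem~\ref{second cohomology zero implies rigidity} to the closed subscheme $X=S_{n,k}\subset\mathbb{A}_\K^{n^3}$, in exactly the same way that Corollary~\ref{coro:Lie-rig} was obtained for $X=L_n$. Since $S_{n,k}$ is a closed subscheme of $L_n$, which in turn is closed in $\mathbb{A}_\K^{n^3}$, the only hypothesis of that theorem that is not automatic is $GL_n$-invariance.

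To check invariance, I would run an easy induction on $i$ to see that for every commutative $\K$-algebra $R$, every $g\in GL_n(R)$ and every $\mu\in L_n(R)$ one has $(g\cdot\mu)^{(i)}(x_1,\dots,x_{2^i})=g\big(\mu^{(i)}(g^{-1}x_1,\dots,g^{-1}x_{2^i})\big)$: the base case $i=0$ is the identity map, and the inductive step follows at once from the recursion $\mu^{(i)}(\cdots)=\mu\big(\mu^{(i-1)}(\cdots),\mu^{(i-1)}(\cdots)\big)$ together with $(g\cdot\mu)(x,y)=g(\mu(g^{-1}x,g^{-1}y))$. Consequently $(g\cdot\mu)^{(k)}=0$ if and only if $\mu^{(k)}=0$, so $g\cdot\mu\in S_{n,k}(R)$ whenever $\mu\in S_{n,k}(R)$, which is the required $GL_n$-invariance of $S_{n,k}$.

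With the hypotheses of Theorem~\ref{second cohomology zero implies rigidity} in force for $X=S_{n,k}$, and using the identification $H^2_{S_{n,k}}(\mu,\mu)=H^2_{k\textit{-sol}}(\mu,\mu)$ established just above from the description of $Z^2_{S_{n,k}}(\mu,\mu)$ via $\sigma_k$, the assumption $H^2_{k\textit{-sol}}(\mu,\mu)=0$ gives that $O(\mu)$ is an open subscheme of $S_{n,k}$. Passing to $\K$-rational points, $O(\mu)(\K)$ is the isomorphism class of $\mu$ and it is a Zariski-open subset of $\S_{n,k}=S_{n,k}(\K)$, which by definition means $\mu$ is rigid in $\S_{n,k}$. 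I do not expect any genuine obstacle here: all of the substance is already in Theorem~\ref{second cohomology zero implies rigidity} and in the tangent-space computation preceding the statement, and the only things left to supply are the one-line induction for $GL_n$-invariance and the routine observation that openness of $O(\mu)$ as a subscheme descends to openness of its set of rational points.
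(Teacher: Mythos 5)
Your proposal is correct and follows exactly the route the paper intends: the corollary is stated in the appendix as an immediate consequence of Theorem~\ref{second cohomology zero implies rigidity} applied to the $GL_n$-invariant closed subscheme $X=S_{n,k}$, together with the computation of $Z^2_{S_{n,k}}(\mu,\mu)$ via $\sigma_k$ given just before it. The only material you add is the explicit (and correct) induction verifying $GL_n$-invariance of $S_{n,k}$, which the paper leaves implicit.
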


\subsection*{The scheme of $k$-step nilpotent Lie brackets on $\K^n$}  
Let $1\leq k\leq n-1$ and let $N_{n,k}\subset L_n$ be the closed subscheme such that for all commutative $\K$-algebras $R$,
$N_{n,k}(R)$ is the set of those $\mu\in L_n(R)$ such that
$$
\mu^{k}(x_1,\ldots,x_{k+1})=0,\quad\forall x_1,\ldots,x_{k+1}\in R^n,
$$
where $\mu^{i}:\underbrace{R^n\times\cdots\times R^n}_{i+1}\to R^n$ is defined inductively by setting $\mu^0=\on{id}_{R^n}$ and 
\begin{align*}
\mu^{i}(x_1,\ldots,x_{i+1}):=\mu(\mu^{i-1}(x_1,\ldots,x_i),x_{i+1})\quad\mbox{for}\quad i\geq 1.
\end{align*}

Given $\mu\in N_{n,k}(\K)$, by definition $Z_{N_{n,k}}^2(\mu,\mu)$ is the set of those $\omega\in Z_{L_n}^2(\mu,\mu)$ satisfying the additional condition $(\mu+\varepsilon\omega)^k(x_1,\ldots,x_{k+1})=0$ for all $x_1,\ldots,x_{k+1}\in \K^n.$
One easily checks by induction that
\begin{align*}
(\mu+\varepsilon\omega)^k(x_1,\ldots,x_{k+1})= \mu^k(x_1,\ldots,x_{k+1})+\varepsilon \eta_k\omega(x_1,\ldots,x_{k+1})
\end{align*}
$\eta_k\omega:\underbrace{\K\times\cdots\times\K}_{k+1}\to\K^n$ is the $\K$-multilinear map
\begin{align*}
\eta_k\omega(x_1,\ldots,x_{k+1})=\sum_{i=1}^k\mu^{k-i}(\omega(\mu^{i-1}(x_1,\ldots,x_{i}),x_{i+1}),x_{i+2},\ldots,x_{k+1}).
\end{align*}
Since $\mu^k(x_1,\ldots,x_{k+1})=0$ we obtain that
\begin{align*}
Z_{N_{n,k}}^2(\mu,\mu)=\{\omega\in Z_{K_n}^2(\mu,\mu): \eta_k\omega=0\}.
\end{align*}
We shall denote $H_{k\textrm{-nil}}^2(\mu,\mu):=H_{N_{n,k}}^2(\mu,\mu)$. 

By using the notation from Section 2.1, we can rewrite it as follows:
 \begin{align*}
 H^2_{k\textrm{-nil}}(\mu,\mu)&=\frac{Z_{N_{n,k}}^2(\mu,\mu)}{B^2(\mu,\mu)}\\
 &=\frac{Ker(\delta)\bigcap Ker(\eta_k)}{Im(\delta^1)},
 \end{align*}
 with $\delta:\Lambda^2({\K^n}^*)\rightarrow\Lambda^3({\K^n}^*)$ as before: 
 $$
 \delta\omega(x,y,z):=\sc \mu(\omega(x,y),z)+\sc \omega(\mu(x,y),z),
 $$ 
 $\delta^1:\Lambda^1({\K^n}^*)\rightarrow\Lambda^2({\K^n}^*)$:
 $$\delta^1(f)(x,y)=\mu(f(x),y)+\mu(x,f(y))-f(\mu(x,y)),$$
 and $\eta_k$ given by: 
 $$\eta_k(\omega)=
 \sum_{j=0}^{k-1}\mu^{k-1-j}\circ\omega\circ\mu^{j}.
 $$

 Note that this is the $k$-nil cohomology introduced in \cite{BCC}.

\begin{corollary}\label{coro:k-rig}
Let $\K$ be an algebraically closed field of characteristic zero and let $\N_{n,k}=N_{n,k}(\K)$, 
with the structure of affine algebraic variety. If $H^2_{k\textit{-nil}}(\mu,\mu)=0$, then $\mu$ is rigid in $\N_{n,k}$. 	
\end{corollary}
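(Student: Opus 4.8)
The plan is to obtain this as an immediate consequence of Theorem~\ref{second cohomology zero implies rigidity}, applied to the closed subscheme $X=N_{n,k}$ of $\mathbb{A}_\K^{n^3}$, combined with the computation of $Z_{N_{n,k}}^2(\mu,\mu)$ carried out just above.

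First I would note that $N_{n,k}$ is a closed subscheme of $\mathbb{A}_\K^{n^3}$ which is invariant under the $GL_n$-action, since the conditions defining $N_{n,k}(R)$ (alternation, the Jacobi identity, and $\mu^k=0$) are all preserved by change of basis; hence the hypotheses of Theorem~\ref{second cohomology zero implies rigidity} are met with $X=N_{n,k}$. Next, the computation preceding the statement shows that $Z_{N_{n,k}}^2(\mu,\mu)=\{\omega\in Z_{L_n}^2(\mu,\mu):\eta_k\omega=0\}$, while $B^2(\mu,\mu)$ is the same subspace in both settings; therefore $H_{N_{n,k}}^2(\mu,\mu)=H_{k\textrm{-nil}}^2(\mu,\mu)$. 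Thus the hypothesis $H_{k\textrm{-nil}}^2(\mu,\mu)=0$ is precisely condition~(1) of the theorem, and applying it gives condition~(2): the orbit $O(\mu)$ is an open subscheme of $N_{n,k}$.

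Finally I would pass to $\K$-rational points: $O(\mu)(\K)$ is open in $N_{n,k}(\K)=\N_{n,k}$, and since $\K$ is algebraically closed $O(\mu)(\K)$ coincides with the $GL_n(\K)$-orbit of $\mu$, i.e.\ with its isomorphism class in $\N_{n,k}$; hence $\mu$ has an open orbit in $\N_{n,k}$, which is exactly the assertion that $\mu$ is rigid there. I do not anticipate any real difficulty, since all the substance lies in Theorem~\ref{second cohomology zero implies rigidity} and in the scheme-theoretic identification of the tangent space $Z_{N_{n,k}}^2(\mu,\mu)$; the only point deserving a word of care is the routine translation from ``open subscheme'' to ``open subset of $\K$-points'' and the identification of $O(\mu)(\K)$ with the set-theoretic orbit, which is standard because $GL_n$ is smooth and connected and $\K$ is algebraically closed.
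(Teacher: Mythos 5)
Your argument is exactly the paper's: the corollary is stated as an immediate consequence of Theorem~\ref{second cohomology zero implies rigidity} applied to the $GL_n$-invariant closed subscheme $X=N_{n,k}$, using the computation $Z_{N_{n,k}}^2(\mu,\mu)=\{\omega\in Z_{L_n}^2(\mu,\mu):\eta_k\omega=0\}$ carried out just before the statement. Your additional care about passing from the open subscheme $O(\mu)$ to openness of the set-theoretic orbit in $\N_{n,k}=N_{n,k}(\K)$ is correct and fills in the routine step the paper leaves implicit.
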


\begin{remark}
If $\K=\R$ and $H^2(\mu,\mu)=0$, then $O(\mu)$ is open in $(L_n)_{\textrm{red}}$ hence $O(\mu)(\mathbb{R})$ 
is open in $L_n(\mathbb{R})$ if we view $L_n(\mathbb{R})$ as $\mathbb{R}$-analytic space.  
A similar observation holds for $S_{n,k}$ and $N_{n,k}$. In particular we recover \cite[Theorem 2.1]{BCC}
\end{remark}

\medskip

\begin{acknowledgements}
 The authors would like to thank an anonymous referee for his or her thorough reading of this paper.  
 His or her comments made us improve significantly its first version. 
 This paper is part of the Ph.D. thesis of Josefina Barrio\-nuevo, being carried out thanks to a Doctoral Fellowship from CONICET, Argentina.
\end{acknowledgements}

\medskip

%=====================================================================

\end{document}